\newtheorem{thm}{Theorem}[section]
\newtheorem*{thm*}{Theorem}
\newtheorem{prop}{Proposition}[section]
\newtheorem{lem}{Lemma}[section]
\newtheorem{conj}{Conjecture}[section]
\newtheorem{example}{Example}[section]
\theoremstyle{remark}
\theoremstyle{plain} \newtheorem{GS2.1}{Theorem A}
\renewcommand{\geq}{\geqslant}
\renewcommand{\leq}{\leqslant}
\renewcommand{\mod}[1]{{\ifmmode\text{\rm\ (mod~$#1$)}\else\discretionary{}{}{\hbox{ }}\rm(mod~$#1$)\fi}}
\newcommand{\codim}{\mathrm{codim}}
\newcommand{\id}{\mathrm{id}}
\begin{document}

\title{Strict Bott-Samelson Resolutions of Schubert Varieties}

\author{Sergio Da Silva}

\address{Cornell University, Ithaca NY}

\email{smd322@cornell.edu}

\thanks{Research supported in part by NSERC PGS-D3 Scholarship}

\begin{abstract}

We explore the relationship between two desingularization techniques for Schubert varieties. The Bott-Samelson resolution is the more common of the two, but it fails to encompass many properties that Hironaka resolutions provide, in particular, being an isomorphism over the smooth locus. Using a computer search, a list of cases where Bott-Samelson resolutions having this ``strictness" property is compiled for the $n=5,6$ cases. A conjecture based on these results is formulated and is subsequently verified for $n=7, 8$. A comparison between Bott-Samelson resolutions and blow-ups is also provided.

\end{abstract}

\maketitle

\tableofcontents

In 1958, Bott and Samelson introduced certain spaces \cite{Bott-Samelson} that provided convenient desingularizations of Schubert varieties (which were later generalized by Hansen \cite{Hansen} and  Demazure \cite{Demazure}). Around the same time, Hironaka published his famous result on the resolution of singularities for algebraic varieties in characteristic zero \cite{Hironaka}. His result could of course be applied to Schubert varieties to obtain a second, very different sort of desingularization. While Hironaka's method is a stronger result in general, the fact that Bott-Samelson resolutions are the preferred desingularization method would speak to the combinatorial conveniences they possess. What if we could get the best of both methods?

 An important feature of a Hironaka desingularization (such as the algorithm in \cite{Bierstone-Milman}) is the fact that the desingularization map is an isomorphism over the smooth locus. One calls such a desingularization a \textbf{strict resolution} of singularities. In general, the Bott-Samelson resolution is not an isomorphism over the smooth locus of a Schubert variety. Even more, while Hironaka's method utilizes blow-ups, a Bott-Samelson resolution is not in general a blow-up map (see Section \ref{blow-up}). Nevertheless, the resolution has many combinatorial properties that are natural for working with Schubert varieties. There is for example an action of a torus $T$ with isolated fixed points, and the map on $T$-fixed points is especially easy to utilize. An explanation of some of these useful properties can be found in Section \ref{BS-Resolution}.
 
 Similar constructions to Bott-Samelson varieties can be considered for the purpose of strict resolutions. For example, in \cite{Cortez}, Cortez introduces quasi-resolutions of Schubert varieties and studies their singularities in terms of intersections of the exceptional loci. This approach extends the work in \cite{Cortez2} where strict resolutions are provided for covexillary Schubert varieties. For the general case however, these quasi-resolutions provide a partial desingularization that may not necessarily be strict. Some of these details are discussed in Section \ref{BSR} 

We will explore both desingularization algorithms in more detail, and suggest a more general method for resolving singularities which is close to the Bott-Samelson construction, and yet is a strict desingularization. We can do this by taking advantage of a generalized Bott-Samelson resolution (see Section \ref{BSR}). In Section \ref{strict} we implement a computer program to verify which Schubert varieties can be resolved strictly using this generalized method. We will verify the following result:

\begin{thm*}
	Let $X^w$ be a singular Schubert variety. If $w\in S_5$, then there exists a strict Bott-Samelson resolution of singularities for $X^w$ iff  $w\neq 45312$. If $w\in S_6$, then there exists a strict Bott-Samelson resolution of singularities for $X^w$ if  $w\ngeq 156423, 453126 \text{ or } 632541$.
\end{thm*}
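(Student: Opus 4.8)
The plan is to reduce the geometric condition of strictness to a finite combinatorial check at the $T$-fixed points, and then to carry out that check over all admissible resolution data by computer. The starting observation is that a generalized Bott-Samelson map $\pi\colon Z \to X^w$ is proper, birational, and $B$-equivariant (hence $T$-equivariant), while $X^w$ is normal. By Zariski's main theorem, $\pi$ is an isomorphism precisely over the open locus where its fibers are single points, so the image $E$ of the non-isomorphism locus is a closed, $B$-stable subset of $X^w$, i.e. a union of Schubert subvarieties. Such a union is determined by the set of $T$-fixed points $e_v$ (with $v \le w$ in Bruhat order) that it contains. Consequently $\pi$ is strict if and only if $E \subseteq (X^w)_{\mathrm{sing}}$, and this holds if and only if for every $v \le w$ whose fixed point $e_v$ lies in the smooth locus, the fiber $\pi^{-1}(e_v)$ is a single point. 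This is exactly the setting where, as noted in Section \ref{BS-Resolution}, the map on $T$-fixed points is easy to exploit.

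The reduction splits the problem into three purely combinatorial ingredients. First, I would identify the smooth fixed points of $X^w$ from the known pattern description of the singular locus (Lakshmibai--Sandhya smoothness together with the explicit maximal singular Schubert subvarieties), which for $S_5$ and $S_6$ is a short finite list. Second, for each admissible generalized Bott-Samelson datum $D$ for $w$ as constructed in Section \ref{BSR}, and each smooth $v \le w$, I would compute $\dim \pi_D^{-1}(e_v)$ directly from the combinatorics of the tower: its $T$-fixed points are indexed by subwords, they map to $e_v$ via the Demazure product, and the fiber dimension over $e_v$ is read off from the fibration structure restricted to the subwords whose Demazure product equals $v$. Third, $D$ yields a strict resolution exactly when all of these fiber dimensions vanish, and $X^w$ admits a strict Bott-Samelson resolution exactly when some admissible $D$ does.

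With this criterion in place, the two halves of the statement call for opposite searches. For the positive assertions — every singular $w \neq 45312$ in $S_5$, and every $w \in S_6$ with $w \not\ge 156423,\ 453126,\ 632541$ — it suffices to exhibit one witnessing datum $D$ per permutation, a bounded forward search in which the Bruhat-order phrasing ($\not\ge$) lets me organize the $S_6$ cases by their lower intervals. For the negative claim that $X^{45312}$ admits no strict resolution, I must instead rule out every admissible $D$, verifying that each choice forces a positive-dimensional fiber over some smooth fixed point.

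I expect this exhaustive negative direction to be the main obstacle. The space of generalized Bott-Samelson data grows rapidly with $\ell(w)$, so the computation must be organized so that the enumeration over data is genuinely finite and tractable, and so that each fiber-dimension computation is performed symbolically from the tower's subword combinatorics rather than geometrically. Bounding the size of this search, and justifying that it is exhaustive rather than merely extensive, is the crux of the argument; the same organization is what will make the subsequent $n = 7, 8$ verifications feasible.
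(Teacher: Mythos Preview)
Your reduction to $T$-fixed points is correct and is essentially the paper's Lemma~4.3 (the paper reaches it via a toric fixed-point count rather than Zariski's Main Theorem, but the content is the same). One small slip: the $T$-fixed points of $BS^Q$ map to $e_v$ via the \emph{ordinary} group product of the subword, not the Demazure product; these differ exactly on non-reduced subwords, which are precisely the ones you need to detect.

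The substantive gap is organizational. You acknowledge that bounding the search over ``admissible data $D$'' is the crux, but you do not supply the bound, and without it the negative direction for $45312$ is not a proof. The paper's missing ingredient is Lemma~4.1: if any generalized $\varphi_Q$ with $Q=(w_1,\dots,w_k)$ is strict, then so is each $2$-step factor $\varphi_{(w_1\cdots w_{j},\,w_{j+1}\cdots w_k)}$. This collapses the search to factorizations $w=w_1\ast w_2$, of which there are at most $\sum_Q(\ell(w)-1)$ over reduced words $Q$; for $45312$ one then checks (partly by hand, since every such split forces $s_2\le w_1$ and $s_2\le w_2$, giving an extra preimage of the smooth point $s_4$) that none is strict, and Lemma~4.1 converts this into ``no $Q$ whatsoever works.'' Your proposal also elides the recursive step: a strict $2$-split $w=w_1\ast w_2$ is only a resolution if each $X^{w_i}$ can in turn be strictly resolved, so in principle a good first split could dead-end at a bad factor. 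For $n=5$ the paper disposes of this by observing that the only permutations above $45312$ in Bruhat order are smooth, so $45312$ can never appear as a factor $w_i$ for any other singular $w$; for $n=6$ the same issue is why the statement is phrased with $\not\geq$ rather than $\neq$. You should make both of these steps explicit.
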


Unfortunately the converse of the theorem for the $n=6$ case does not hold (see Example 4.2).
The longer list of exceptions in the $n=6$ case mostly relate to a pattern embedding of $45312$, which suggested a recursive construction for such a list in general. The conjecture for $n>4$ in Section 4 was formulated based on these results. Since there are other reasonable interpretations of the $n=5,6$ computer results, it seemed necessary to verify that the conjecture was true at least for $n=7, 8$ and indeed this is the case.

 The curiosity of whether a Bott-Samelson resolution could be viewed as a blow-up (bringing the technique closer to Hironaka's version) led to the work in Section 3. While no general theorems are provided in this section, a new viewpoint of the resolution is explored and some groundwork is provided for further research.

\textbf{Acknowledgments:} I would like to thank my advisor Allen Knutson for our many discussions on this topic. This article would not be possible without the ideas that he shared with me.

\section{Schubert and Bott-Samelson Varieties}\label{BS-Resolution}

Let $G=GL_n(\mathbb{C})$ be the group of invertible $n\times n$ matrices with entries in $\mathbb{C}$. Let us also denote the subgroup of upper triangular, lower triangular and diagonal matrices by $B, B_{-}, T$ respectively. The space of complete flags Flag($\mathbb{C}^n$) can be identified as the quotient $G/B$. We know that $G/B$ has a Bruhat decomposition

$$G/B = \bigsqcup_{w\in S_n} BwB/B \text{  .}$$

For each $w\in W=S_n$, let $X^w_o = BwB/B$, called a Schubert cell. Then its Zariski closure is the Schubert variety $X^w$ (this is different than the notation $X_w$ used by some other authors). There is an obvious left $T$ action on $X^w$, and the $T$-fixed points are $e_v :=vB/B$ for $v\leq w$. Every point in $X^w$ is contained in the $B$-orbit of some $e_v$.

 We should note that except for sections that utilize the explicit pattern embedding characterization of singularities in $GL_n(\mathbb{C})$ (such as Sections \ref{n5} and \ref{n6}), the results in this paper are valid for any reductive group.

\subsection{Kazhdan-Lusztig Varieties}

To get local coordinates for $G/B$, and hence to view local equations for $X^w$, we resort to computing Kazhdan-Lusztig ideals (see \cite[\S 3]{Woo-Yong}). The $B$-action on $G/B$ provides us with an isomorphism between a neighborhood of any point with a neighborhood of a $T$-fixed point. Let $X_w^o:= B_{-}wB/B$ be the opposite Schubert cell. Then an affine neighborhood of $e_v$ is given by $vX^o_{\id}$. To provide local equations for $X^w$ at $e_v$, it is enough to study $X^w\cap vX^o_{\id}$. By the Kazhdan-Lusztig Lemma (see 3.2 in \cite{Woo-Yong}), this intersection is isomorphic to $(X^w\cap X^o_v)\times \mathbb{A}^{l(v)}$. The $ X^w\cap X^o_v$ provides us with the local equations we are looking for. These equations will be useful for computations in the examples that follow.

 As a side note, the fact that $ X^w\cap X^o_v\neq \emptyset$ iff $w\geq v$ (and conventionally posets are drawn with larger elements on the top) helps justify the notation $X^w$ instead of $X_w$.
 
\begin{example}
	\normalfont Let us compute local equations for the variety $X^{4231}$ at $e_{\id}$. To do this we need to intersect the Schubert conditions from the permutation matrix

 \[ \left( \begin{array}{cccc}
 0 & 0 & 0 & 1\\
 0 & 1 & 0 & 0 \\
 0 & 0 & 1 & 0 \\
 1 & 0 & 0 & 0 \end{array} \right)\]
(which provide defining equations for $\pi^{-1}(X^{4231})$ where $\pi:G\rightarrow G/B$) with the coordinates from the open cell (which is isomorphic to affine space)

 \[ \left( \begin{array}{cccc}
 1 & 0 & 0 & 0\\
 z_{31} & 1 & 0 & 0 \\
 z_{21} & z_{22} & 1 & 0 \\
 z_{11} & z_{12} & z_{13} & 1 \end{array} \right)\]

 The Schubert conditions simply say that the southwest-most $2\times 2$ minor vanishes. In other words, the Kazhdan-Lusztig ideal is generated by $z_{21}z_{12}-z_{22}z_{11}=0$. 
 \end{example}

\subsection{Bott-Samelson Resolutions}\label{BSR}

Let $Q=(w_1,...,w_k)$, where $w_i\in W$. A generalized Bott-Samelson variety $BS^Q$ is the quotient of $\overline{Bw_1B}\times ...\times \overline{Bw_kB}$ by the $B^k$ action given by:

$$(b_1,...,b_k)\cdot (p_1,...,p_k):= (p_1b_1^{-1},b_1p_2b_2^{-1},...,b_{k-1}p_kb_k^{-1}).$$

We write this as $BS^Q:= \overline{Bw_1B}\times^B ...\times^B \overline{Bw_kB}/B$ where $\times^B$ is the quotient of the pair by the above action for $k=2$. We naturally have the $B$-equivariant map

$$\varphi_Q:\overline{Bw_1B}\times^B ...\times^B \overline{Bw_kB}/B \rightarrow G/B$$
defined by $(p_1,...,p_k) \rightarrow p_1\cdot ... \cdot p_k B/B$. It is not hard to see that $\varphi_Q$ maps onto $X^{\text{Dem(Q)}}$, where Dem(Q) is the Demazure product of the entries of $Q$. We can define Dem(Q) inductively as

\[
\text{Dem((Q,$s_i$))} = \begin{cases} 
\text{Dem(Q)$\cdot s_i$} & l(\text{Dem(Q$\cdot s_i$)}) > l(\text{Dem(Q)}) \\
\text{Dem(Q)} & \text{otherwise} \\
\end{cases}\]
where the Demazure product of the empty word is the identity.

We can similarly define a sub Bott-Samelson variety $BS^R\subset BS^{Q}$ by taking $R=(v_1,...,v_k)$ where $v_i\leq w_i$.

In many papers on the subject, the Bott-Samelson variety refers to the special case where each non-identity $w_i$ is a simple reflection $s_{k_i}\in W$, so that $\overline{Bw_iB}\cong P_{k_i}$ is a minimal parabolic subgroup of  $G$. In this case, $BS^Q$ is smooth (indeed it is just an iterated $\mathbb{P}^1$-bundle) and so $\varphi_Q$ defines a resolution of singularities for $X^w$.  This version of $BS^Q$ is easier to visualize in the context of flag varieties (see \cite{Escobar} for examples of this). Note that in this case, we will write $Q$ as a string of $k_i$ with $-$ in place of the identity (for example, $Q= (s_1,s_2,e,s_1)$ can be written as $12-1$).

Although $BS^Q$ is not smooth in general, it is Cohen-Macaulay and normal.

\begin{lem}
The Bott-Samelson variety $BS^Q$ is both Cohen-Macaulay and normal.
\end{lem}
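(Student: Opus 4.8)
The plan is to prove both assertions simultaneously by induction on the length $k$ of the word $Q=(w_1,\dots,w_k)$, by exhibiting $BS^Q$ as the top of a tower of Zariski-locally-trivial fibrations whose base and whose fibers are ordinary Schubert varieties. The only substantial external input I would invoke is the classical fact that every Schubert variety $X^{w}$ (equivalently $\overline{BwB}/B$) is normal and Cohen-Macaulay; over $\mathbb{C}$ this is standard. Granting this, the inductive engine is that normality and Cohen-Macaulayness are local properties inherited by a space which is, locally on the base, a product of two normal (resp.\ Cohen-Macaulay) varieties over $\mathbb{C}$.

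For the base case $k=1$ we have $BS^{(w_1)}=\overline{Bw_1B}/B=X^{w_1}$, which is normal and Cohen-Macaulay by the cited result. For the inductive step I would use the projection forgetting the last coordinate,
$$\pi\colon BS^{(w_1,\dots,w_k)}\longrightarrow BS^{(w_1,\dots,w_{k-1})},\qquad (p_1,\dots,p_k)\longmapsto (p_1,\dots,p_{k-1}).$$
Writing $E=\overline{Bw_1B}\times^B\cdots\times^B\overline{Bw_{k-1}B}$ for the partially quotiented space (that is, before the final quotient by $B$), right multiplication by $B$ on the last factor is free and realizes $E\to BS^{(w_1,\dots,w_{k-1})}$ as a principal $B$-bundle, so that $BS^{(w_1,\dots,w_k)}=E\times^B\big(\overline{Bw_kB}/B\big)=E\times^B X^{w_k}$ is the associated fiber bundle with fiber $X^{w_k}$.

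Since $B$ is a connected solvable linear algebraic group, it is special in the sense of Serre, so every principal $B$-bundle is Zariski-locally trivial; in particular $\pi$ is Zariski-locally the projection $U\times X^{w_k}\to U$. As normality and Cohen-Macaulayness can be checked Zariski-locally, and a product of two normal (resp.\ Cohen-Macaulay) varieties over $\mathbb{C}$ is again normal (resp.\ Cohen-Macaulay), the inductive hypothesis that $BS^{(w_1,\dots,w_{k-1})}$ is normal and Cohen-Macaulay, together with the classical input for $X^{w_k}$, forces $BS^{(w_1,\dots,w_k)}$ to be normal and Cohen-Macaulay, completing the induction.

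The point requiring the most care — and which I regard as the crux — is the identification of $\pi$ as a genuine fiber bundle with the stated fiber: one must verify that $E$ is a principal $B$-bundle over $BS^{(w_1,\dots,w_{k-1})}$ and then use local triviality to reduce to an honest product. Freeness of the relevant right $B$-action descends from freeness of right multiplication of $B$ on $G$, and local triviality is exactly where $B$ being special is used; once these are in place, the descent of normality and Cohen-Macaulayness through local products is formal. A less self-contained alternative would be to exhibit a compatible Frobenius splitting of $BS^Q$ in positive characteristic and deduce normality directly, with Cohen-Macaulayness again following from the fibration; I would nonetheless prefer the tower argument, since it keeps the single external input cleanly separated from the inductive geometry.
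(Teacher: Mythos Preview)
Your proof is correct and follows essentially the same approach as the paper: induction on the length of $Q$ via the projection forgetting the last factor, which exhibits $BS^Q$ as a Zariski-locally-trivial $X^{w_k}$-bundle over $BS^{(w_1,\dots,w_{k-1})}$, and then the classical normality and Cohen--Macaulayness of Schubert varieties plus locality of these properties finish the induction. The paper's argument is terser and does not spell out why the bundle is Zariski-locally trivial; you supply this by invoking that $B$ is special in Serre's sense, which is a welcome clarification rather than a departure.
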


\begin{proof}

Using the projection map

$$\overline{Bw_1B}\times^B ...\times^B \overline{Bw_kB}/B \rightarrow \overline{Bw_1B}\times^B ...\times^B \overline{Bw_{k-1}B}/B $$
it is easy to see that $BS^Q$ is just an iterated Schubert variety bundle over $X^w$. It is well known that $X^w$ is both Cohen-Macaulay and normal. Since both of these conditions are local, and since the bundle map above is locally trivial, it follows by induction that $BS^Q$ also has these properties.
\end{proof}

\begin{lem}
 Let $\varphi_Q$ be a Bott-Samelson map with $Q=(w_1,...,w_k)$. Then $\varphi_Q$ is always a proper map. Furthermore, $\varphi_Q$ is birational if and only if $Dem(Q) = \prod w_i$ (in which case we say $Q$ is reduced).
\end{lem}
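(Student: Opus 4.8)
The plan is to prove the two assertions separately: first properness, then the birationality criterion. For properness, I would exploit the fact that each factor $\overline{Bw_iB}$ is a Schubert variety, hence a closed subvariety of the projective variety $G/B$ (or more naturally of the partial flag variety $P_{i}/B$). The key observation is that the Bott-Samelson variety $BS^Q$ is itself projective: it is built as an iterated fiber bundle whose fibers are Schubert varieties (as established in the preceding Cohen-Macaulay lemma via the projection maps), and each such bundle stage is proper over the previous one. Since $BS^Q$ is projective and $G/B$ is separated, any morphism between them — in particular $\varphi_Q$ — is automatically proper. I would phrase this cleanly using the projection tower $BS^Q \to BS^{(w_1,\dots,w_{k-1})} \to \cdots \to \overline{Bw_1B} \to \mathrm{pt}$, noting each arrow is proper because its fibers are the projective Schubert varieties $\overline{Bw_iB}$, and properness is stable under composition.

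For the birationality statement, the approach splits into showing (a) $\varphi_Q$ is birational when $\mathrm{Dem}(Q) = \prod w_i$, and (b) it fails to be birational otherwise. The natural tool is a dimension count together with the already-stated fact that $\varphi_Q$ maps onto $X^{\mathrm{Dem}(Q)}$. I would first compute $\dim BS^Q$: because $BS^Q$ is the iterated Schubert bundle, its dimension is the sum of the fiber dimensions, namely $\sum_i \dim \overline{Bw_iB} = \sum_i \ell(w_i)$. On the other hand $\dim X^{\mathrm{Dem}(Q)} = \ell(\mathrm{Dem}(Q))$. The inductive definition of the Demazure product shows that $\ell(\mathrm{Dem}(Q)) \le \sum_i \ell(w_i)$, with equality precisely when no length-collapse occurs in the Demazure multiplication, i.e. exactly when $\mathrm{Dem}(Q) = \prod w_i$ as a reduced product.

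**The heart of the argument** is converting this equality of dimensions into genuine birationality, and this is the step I expect to be the main obstacle. Equality of dimensions alone does not give birationality — one needs the generic fiber to be a single reduced point. When $\mathrm{Dem}(Q) = \prod w_i$, I would argue that over the open dense cell $B\,(\prod w_i)\,B/B \subset X^{\mathrm{Dem}(Q)}$ the map $\varphi_Q$ is injective, which by dimension-count (the source and target being irreducible of equal dimension, with $BS^Q$ normal by the previous lemma and the target a variety over $\mathbb{C}$) upgrades to an isomorphism over that cell via Zariski's main theorem. The injectivity over the big cell is where the reducedness condition is used concretely: a point $(p_1,\dots,p_k)$ mapping into the open Schubert cell forces each $p_i$ to lie in its own open cell $Bw_iB$, and the reduced-word condition then lets one recover the $p_i$ uniquely up to the $B^k$-action, because the factorization $p_1\cdots p_k \in B\,(\prod w_i)\,B$ is unique.

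For the converse direction (b), when $\mathrm{Dem}(Q) \neq \prod w_i$ we have the strict inequality $\dim BS^Q = \sum \ell(w_i) > \ell(\mathrm{Dem}(Q)) = \dim X^{\mathrm{Dem}(Q)}$, so the surjective map $\varphi_Q$ has positive-dimensional generic fibers and hence cannot be birational. This direction is the easy half, resting purely on the dimension inequality. I would therefore present the proof as: properness via projectivity of the tower; the dimension formula $\dim BS^Q = \sum \ell(w_i)$ from the bundle structure; the Demazure-product length inequality with its equality condition; and finally the birationality equivalence, with the forward implication handled by a uniqueness-of-factorization/Zariski's-main-theorem argument over the big cell and the reverse by the strict dimension drop.
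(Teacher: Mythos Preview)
Your proof is correct and, for properness, matches the paper's one-line argument exactly: both $BS^Q$ and $X^w$ are projective over $\mathbb{C}$, hence any morphism between them is proper.

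For the birationality criterion your route differs slightly from the paper's. The paper simply asserts that $\varphi_Q$ is an isomorphism away from the union of sub--Bott--Samelson varieties $BS^R$ with $R$ not reduced, and then observes that this union is a proper subvariety precisely when $Q$ itself is reduced. You instead run a dimension count, $\dim BS^Q = \sum_i \ell(w_i)$ versus $\ell(\mathrm{Dem}(Q))$, together with an explicit injectivity argument over the big Bruhat cell and Zariski's main theorem. Your approach is more self-contained: the paper's asserted fact (isomorphism off the non-reduced strata) is exactly what your big-cell factorization argument justifies in the essential case, and your converse via strict dimension drop is cleaner than the paper's somewhat elliptical ``$Q$ is reduced iff $R\neq Q$''. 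Conversely, the paper's formulation has the advantage of immediately identifying the locus $BS^Q_{ni}$ where $\varphi_Q$ fails to be an isomorphism, which is used repeatedly later in the paper. Both arguments are standard and correct; yours supplies the details the paper leaves implicit.
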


\begin{proof}

 Since $BS^Q$ and $X^w$ are projective, they are both proper over $\mathbb{C}$ so $\varphi_Q$ must be proper also. The second statement follows from the fact that $\varphi_Q$ is an isomorphism away from the sub Bott-Samelson varieties $BS^R$ where $R$ is not reduced. Then $Q$ is reduced iff $R\neq Q$.

\end{proof}

Let us fix notation and denote the proper subvariety where $\varphi_Q$ is not an isomorphism by $BS^Q_{ni}$. In fact, $$BS^Q_{ni} = \bigcup_{R \text{ not reduced}}BS^R.$$

The caveat to using $\varphi_Q$ is that while we are afforded an easy and explicit way to construct a birational map (for reduced $Q$) from a smooth variety to a Schubert variety, this map is not at all like a Hironaka resolution of singularities. The map does enjoy useful properties like being $B$-equivariant, but it is not in general an isomorphism away from the singular locus.

A similar object was considered in  \cite{Cortez} to provide a geometric description for Schubert variety singularities. Given two parabolic subgroups $B\subset P_J\subset P_I$ determined by certain conditions involving $X^w$, we can define

$$ P_I\times^{P_J} X^v \rightarrow X^w $$
where $X^v\subset X^w$. The map onto $X^w$ is not in general an isomorphism over the smooth locus, but the singular locus of $X^w$ can be described by the intersection of certain exceptional loci. Even though $P_I\times^{P_J} X^v$ may not be smooth, its singularities are simpler and constitutes an improvement.

\subsection{Schubert Variety Singularities}

There is a well-known way to tell whether a Schubert variety is singular: $X^w$ is smooth iff it avoids the patterns $3412$ and $4231$. While this answers the question about which Schubert varieties are singular, we need to know where they are singular if we are going to talk about $\varphi_Q$ being strict.

This question can be answered using interval pattern avoidance \cite[\S 2]{Woo-Yong}. Theorem 6.1 in the same article provides the necessary intervals for finding the singular locus. Roughly put, given an interval $[u,v]$ on the list from the theorem ($u\leq v$ are both permutations), if $v$ occurs as a pattern in $w$, then a similarly embedded $u$ provides a piece of the singular locus of $X^w$. 

As an example, $[2143,4231]$ is on the list mentioned above. Since $4231$ is clearly a subpattern of itself, the singular locus of $X^{4231}$ contains the Schubert subvariety $X^{2143}$. The only other applicable interval on that list is $[1324,3412]$, and since $3412$ is not a subpattern of $4231$, then $X^{2143}$ is in fact the entire singular locus of $X^{4231}$.

The details about interval pattern avoidance are not important for the purpose of the sections that follow. These details were necessary for implementing computer code which checked which $X^w$ could be resolved using strict $\varphi_Q$.

\section{Hironaka's Desingularization}\label{Resolution Algorithm}

Given a variety $X$ defined over an algebraically closed field of characteristic $0$, there exists a sequence of blow-ups

 \begin{align*}
  X=X_0\stackrel{\sigma_1}{\leftarrow} X_1\stackrel{\sigma_2}{\leftarrow}\ldots\stackrel{\sigma_k}{\leftarrow} X_k\stackrel{\sigma_{k+1}}{\leftarrow}\ldots\stackrel{\sigma_t}{\leftarrow} X_t
 \end{align*}
 such that $X_t$ is smooth (here $X_{j+1}$ denotes the strict transform of $X_j$) and each $\sigma_{j+1}$ has smooth center $C_j\subset$ Sing$(X_j)$ (see \cite{Bierstone-Milman}). 

   The statement of a Hironaka desingularization usually includes conditions on the exceptional divisors (such as being simple normal crossings with the strict transform).  We will ignore these for now  and instead focus on two main features. First, the center $C_j$ is contained in the singular locus of $X_j$. Second, the desingularization map is a sequence of blow-up maps. The reader will probably notice that the Bott-Samelson resolution has neither of these properties, so a comparison between each desingularization method might be difficult to arrange.  Instead we will focus on finding a suitable subvariety of $X^w$ to call the center of $\varphi_Q$. \newline

 \textbf{Universal Property of Blow-ups:} The blow-up of a scheme $X$ along some closed subscheme $Y$ is a fibre diagram:

  \[
  \begin{tikzcd}
  E \arrow[swap]{d}\arrow{r} & Bl_Y(X)\arrow{d}\\
  Y \arrow{r} & X
  \end{tikzcd}
  \]
 where $E$ is an effective Cartier divisor, such that any other map to $X$ with an effective Cartier divisor as the fibre over $Y$ factors through the diagram. The only Cartier divisors considered in the sections that follow are effective.

 Characterizing the blow-up in this way will be useful in determining whether we can view Bott-Samelson maps as blow-up maps.

Let us consider a motivational example to observe the similarities between each desingularization technique. Consider $X^{4231}$, one of the two simplest singular Schubert varieties. The permutation $4231$ can be written as the word $s_1s_2s_3s_2s_1$ in simple reflections. Let $Q =(s_1,s_2s_3s_2s_1)$.  It is not hard to show that

$$\varphi_Q:\overline{Bs_1B}\times^B \overline{Bs_2s_3s_2s_1B}/B \rightarrow X^{4231}$$
is a strict Bott-Samelson resolution. As we will see later in Section \ref{strict}, it is enough to check that this is true at $T$-fixed points. In other words, we are examining the product map on Bruhat intervals

$$[e,s_1]\times [e,s_2s_3s_2s_1]\rightarrow [e,s_1s_2s_3s_2s_1].$$

We observe that this map is 2-to-1 over $\{e,s_1,s_3,s_1s_3\}$ (for example, $(e,s_3)$ and $(s_1,s_3s_1)$ are the fibre over $s_3$) and hence one-to-one everywhere else by a cardinality check. By \cite{Woo-Yong}, $X^{4231}$ is singular precisely at the $T$-fixed points $\{e,s_1,s_3,s_1s_3\}$. Therefore $\varphi_Q$ is a strict Bott-Samelson resolution of singularities (notice that $BS^Q$ is smooth).

To observe what a Hironaka resolution would do (at least locally), we should work in an affine neighborhood on the Schubert variety, and compute some Kazhdan-Lusztig varieties. In the previous section, we showed that the Kazhdan-Lusztig ideal of $X^{4231}$ near the identity is generated by $z_{21}z_{12}-z_{22}z_{11}=0$. The center of blowing-up in a Hironaka resolution must then be the origin. It is also easy to see that the strict transform is smooth.

How does this relate back to the Bott-Samelson construction? Observe that the $T$-fixed points for which the map was not injective were $\{1,s_1,s_3,s_1s_3\}$. This means that $\varphi_Q$ is injective away from $X^w$ for $w\leq 2143$. We can compute the Kazhdan-Lusztig ideal for the singular locus $X^{2143}$ (in the same chart $X^o_{\id}$) to observe that it is locally defined by $z_{11}=z_{12}=z_{21}=z_{22}=0$, the origin. This corresponds to the same center as a Hironaka desingularization algorithm!

Although this local picture is encouraging, we have to remember that $\varphi_Q$ is not the same thing as the blow-up map at the origin. In fact, the preimage of the singular locus in $\varphi_Q$ is not even a divisor in $BS^Q$!

This suggests that if we want to make the Bott-Samelson resolution look more like a blow-up, we will have to also blow-up points where the map is already one-to-one in order to get a higher dimensional fibre. If we can find a subvariety $Y$ in $X^w$ which contains the singular locus such that the fibre of $\varphi_Q$ over $Y$ is a divisor, then the Bott-Samelson resolution will be one step closer to behaving like a blow-up.

 \section{Comparing Bott-Samelson Resolutions to Blow-ups}\label{blow-up}

To make a fair comparison between the desingularization maps we are considering, we begin by asking what relationship a blow-up map has with the Bott-Samelson resolution. A simple application of the universal property for blow-ups gives us a starting point.

\begin{lem}
	Let $\varphi:X\rightarrow Z$ be a proper birational map. Then there exists a Cartier divisor $D\subset X$ such that $\varphi$ factors through a blow-up of $Z$ along $Y=\varphi(D)$. 
\end{lem}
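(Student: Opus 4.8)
The plan is to deduce the factorization directly from the universal property of the blow-up recalled above. Recall that $Bl_Y(Z)\to Z$ is terminal among all morphisms $f\colon W\to Z$ for which the inverse image ideal sheaf $f^{-1}\mathcal I_Y\cdot\mathcal O_W$ is invertible, equivalently, for which the scheme-theoretic preimage $f^{-1}(Y)$ is an effective Cartier divisor on $W$. Thus, to prove the lemma it suffices to produce a closed subscheme $Y\subset Z$ whose scheme-theoretic preimage $D:=\varphi^{-1}(Y)$ is an effective Cartier divisor on $X$ and which satisfies $Y=\varphi(D)$; the universal property then supplies a unique morphism $X\to Bl_Y(Z)$ through which $\varphi$ factors. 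Since $\varphi$ is proper and birational between irreducible varieties it is surjective, so $\varphi(\varphi^{-1}(Y))=Y$ as sets and the condition $Y=\varphi(D)$ will hold automatically once $D$ is taken to be such a preimage.

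First I would exploit the fact that $\varphi$ is a projective morphism (a proper morphism between projective $\mathbb C$-varieties is projective), so there exists a line bundle on $X$ that is ample relative to $\varphi$. After twisting by the pullback of a sufficiently ample line bundle on $Z$, I may arrange this relatively ample bundle to be of the form $\mathcal O_X(-D)$ for an effective Cartier divisor $D$, with the coherent sheaves $\varphi_*\mathcal O_X(-nD)$ becoming honest sheaves of ideals in $\mathcal O_Z$ and realizing $X$ as $\mathrm{Proj}_Z\bigl(\bigoplus_{n\geq 0}\varphi_*\mathcal O_X(-nD)\bigr)$. I then take $\mathcal I_Y:=\varphi_*\mathcal O_X(-D)$ and let $Y=V(\mathcal I_Y)$, which is exactly the scheme-theoretic image $\varphi(D)$.

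The heart of the argument, and the step I expect to be the main obstacle, is verifying that with this choice the inverse image ideal sheaf $\varphi^{-1}\mathcal I_Y\cdot\mathcal O_X$ is exactly $\mathcal O_X(-D)$, and in particular invertible. A priori the scheme-theoretic preimage $\varphi^{-1}(Y)$ could be strictly thicker than $D$, in which case the inverse image ideal would fail to be a line bundle and the universal property would not apply. This is precisely where relative ampleness of $\mathcal O_X(-D)$ is essential: it forces the canonical surjection $\varphi^*\varphi_*\mathcal O_X(-nD)\twoheadrightarrow\mathcal O_X(-nD)$ for $n\gg 0$, from which one reads off $\varphi^{-1}\mathcal I_Y\cdot\mathcal O_X=\mathcal O_X(-D)$. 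Once this identity is in hand, $D=\varphi^{-1}(Y)$ is an effective Cartier divisor, the universal property produces the factorization $X\to Bl_Y(Z)\to Z$, and in fact the factoring morphism is an isomorphism, so $\varphi$ is itself identified with the blow-up of $Z$ along $Y=\varphi(D)$. This last observation is stronger than what the lemma asserts and completes the proof.
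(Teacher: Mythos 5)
Your proposal is correct in substance, but it takes a genuinely different and much heavier route than the paper. The paper's proof is elementary: since $\varphi$ is birational there is a closed subset $X_{ni}$ off which $\varphi$ is an isomorphism and which is the full preimage of its image; one picks \emph{any} Cartier divisor $D\supset X_{ni}$, sets $Y=\varphi(D)$, checks $\varphi^{-1}(Y)=D$, and invokes the universal property. Your argument is instead the standard proof of the stronger theorem that a projective birational morphism onto a quasi-projective variety \emph{is} the blow-up of some coherent ideal sheaf (Hartshorne, \emph{Algebraic Geometry}, II.7.17): realize $X$ as $\mathrm{Proj}_Z$ of the pushforward algebra of a relatively ample $\mathcal{O}_X(-D)$, twist by pullbacks of bundles on $Z$ so that $\varphi_*\mathcal{O}_X(-nD)$ becomes an honest ideal sheaf, and verify that the inverse image ideal is exactly $\mathcal{O}_X(-D)$ (after replacing $D$ by $nD$ for $n\gg 0$, since relative ampleness gives the surjection $\varphi^*\varphi_*\mathcal{O}_X(-nD)\twoheadrightarrow\mathcal{O}_X(-nD)$ only in large degree, and with a Veronese-type step to get degree-one generation for the final isomorphism claim). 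To your credit, you isolated precisely the point the paper glosses over: the universal property needs the \emph{scheme-theoretic} preimage ideal to be invertible, whereas the paper only verifies the set-theoretic equality $\varphi^{-1}(Y)=D$. What your route costs: (i) you assume $X$ and $Z$ are projective so that $\varphi$ is a projective morphism, which is not among the lemma's hypotheses (though it holds in every application in the paper, where $X=BS^Q$ and $Z=X^w$); (ii) you lose the freedom to choose $D$, which the paper exploits afterwards --- Proposition 3.1's dichotomy and Example 3.1 are about varying $D$ over Cartier divisors containing $BS^Q_{ni}$, a flexibility the paper's construction provides directly.

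One warning about your final, stronger conclusion that $\varphi$ is itself identified with the blow-up along $Y$. This is true, but the center $Y$ produced this way is in general a non-reduced subscheme, and taken at face value the statement collides with Proposition 3.2 of the paper, which asserts that a particular Bott-Samelson resolution is \emph{not} a blow-up. The tension is resolved by observing that Proposition 3.2's argument implicitly assumes that the scheme-theoretic preimage of the center under the second resolution $\varphi_{Q'}$ is again a Cartier divisor (``automatically Cartier since $BS^{Q'}$ is smooth''), which can fail for the inverse image ideal of a non-reduced center even on a smooth variety. So your proof establishes the lemma (and more) in the projective setting, but the ``more'' should be stated carefully --- as a blow-up of an ideal sheaf, not of a subvariety --- if it is to coexist with the rest of the section.
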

\begin{proof}
	Since $\varphi$ is birational, it is generically one-to-one. Therefore there is a subvariety $X_{ni}\subset X$ such that $\varphi$ is an isomorphism everywhere except on $X_{ni}$.
	We can find a Cartier divisor $D$ containing $X_{ni}$ (the product of the local equations for $X_{ni}$ provide a local defining equation for $D$). Since $\varphi$ is proper, $Y$ is a closed subvariety of $Z$. Now $\varphi$ is an isomorphism on $D\setminus X_{ni}$, and $X_{ni}$ is the full preimage of its image; therefore $\varphi^{-1}(Y)=D$. By the universal property of blow-ups, $\varphi$ factors through the blow-up of $Z$ along $Y$, as required. 
\end{proof}
\begin{prop}
 Let $\varphi_Q:BS^Q\rightarrow X^w$ be a Bott-Samelson resolution of $X^w$ where $Q$ is a reduced word for $w$. Then there exists a Cartier divisor $D\subset BS^{Q}$ such that $\varphi_Q$ factors through a blow-up of $X^w$ along $\varphi_Q(D)$. Furthermore, $\codim(\varphi_Q(D))\geq 2$ iff $D=BS^Q_{ni}$.
\end{prop}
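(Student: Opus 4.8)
The first assertion is an immediate application of the preceding lemma: taking $\varphi = \varphi_Q$, $X = BS^Q$, and $Z = X^w$, the non-isomorphism locus is exactly $BS^Q_{ni}$, so the lemma produces a Cartier divisor $D \supseteq BS^Q_{ni}$ satisfying $\varphi_Q^{-1}(\varphi_Q(D)) = D$ through which $\varphi_Q$ factors. Since $Q$ is reduced, $BS^Q$ is smooth and irreducible (an iterated $\mathbb{P}^1$-bundle) with $\dim BS^Q = l(w) = \dim X^w$, and I would use this throughout. The first step of the ``furthermore'' is to record that $BS^Q_{ni} = \mathrm{Exc}(\varphi_Q)$ is pure of codimension one in $BS^Q$; this is Zariski's purity theorem, valid because the source $BS^Q$ is smooth and the target $X^w$ is normal. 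Consequently $BS^Q_{ni}$ is itself a Cartier divisor, so the statement ``$D = BS^Q_{ni}$'' is meaningful.

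For the direction $D = BS^Q_{ni} \Rightarrow \codim(\varphi_Q(D)) \geq 2$, I would set $Y_0 = \varphi_Q(BS^Q_{ni})$ and argue by contradiction, supposing $Y_0$ has an irreducible component $Y_1$ of codimension one with generic point $\eta$. Because $X^w$ is normal it is regular in codimension one, so $\mathcal{O}_{X^w,\eta}$ is a discrete valuation ring. A dimension count forces the fibre of $\varphi_Q$ over $\eta$ to be finite: were it positive-dimensional, $\varphi_Q^{-1}(Y_1)$ would have dimension at least $\dim Y_1 + 1 = \dim X^w$, impossible inside the irreducible variety $BS^Q$ of that same dimension since $Y_1 \subsetneq X^w$. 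But a proper birational morphism to a normal variety is an isomorphism wherever its fibre is finite (Zariski's main theorem), contradicting $\eta \in Y_0 = \varphi_Q(\mathrm{Exc}(\varphi_Q))$. Hence $\codim(Y_0) \geq 2$.

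For the converse, assume $\codim(\varphi_Q(D)) \geq 2$. Here $D \supseteq BS^Q_{ni}$, and $D$, being a Cartier divisor on the smooth variety $BS^Q$, is pure of codimension one. If some irreducible component $D_1$ of $D$ were not contained in $BS^Q_{ni}$, then $D_1 \setminus BS^Q_{ni}$ would be a dense open subset of $D_1$ on which $\varphi_Q$ restricts to an isomorphism onto its image; thus $\varphi_Q(D_1)$ would have dimension $\dim D_1 = \dim X^w - 1$, i.e.\ codimension one, contradicting $\codim(\varphi_Q(D)) \geq 2$. Therefore every component of $D$ lies in $BS^Q_{ni}$, giving $D \subseteq BS^Q_{ni}$, and the reverse inclusion yields $D = BS^Q_{ni}$.

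I expect the main obstacle to be the codimension bound in the first direction: one must rule out that $\varphi_Q$ contracts a divisorial part of $BS^Q_{ni}$ onto a codimension-one subvariety of $X^w$, and this is precisely where the normality of $X^w$ (regularity in codimension one together with Zariski's main theorem) is indispensable. The converse is by comparison a soft dimension count, relying only on the purity of a Cartier divisor.
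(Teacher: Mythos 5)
Your two implications are essentially sound, but the preliminary claim you hang them on is false: the exceptional locus of a proper birational morphism from a smooth variety to a normal variety need \emph{not} be pure of codimension one. Purity (van der Waerden's theorem) requires the \emph{target} to be locally factorial (or $\mathbb{Q}$-factorial), not merely normal, and Schubert varieties are generally not factorial --- the paper says so explicitly for $X^{4231}$. In fact the paper's own Example 3.1 is a direct counterexample to your assertion: for $Q=(s_1,s_2s_3s_2s_1)$ the variety $BS^Q$ is smooth and $\varphi_Q$ is proper and birational onto the normal variety $X^{4231}$, yet $\codim(BS^Q_{ni})>1$ (this is exactly the small-resolution phenomenon). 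The same already happens for words in simple reflections, e.g.\ $Q=2132$ for $w=3412$, where every one-letter deletion is still reduced, so $BS^Q_{ni}$ is nonempty of codimension $\geq 2$. Consequently $BS^Q_{ni}$ is in general not a divisor at all, and ``$D=BS^Q_{ni}$'' can be unachievable by any Cartier divisor; the correct reading of the proposition (borne out by Example 3.1) is that $\codim(\varphi_Q(D))\geq 2$ occurs only in the special situation where $BS^Q_{ni}$ happens to be a Cartier divisor. Note also that the paper applies this proposition to generalized tuples $Q$ (Example 3.1 again), where $BS^Q$ need not be smooth; what your dimension counts actually use is only that $BS^Q$ is irreducible with $\dim BS^Q=\dim X^w$, which does hold.

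Fortunately the purity claim is not load-bearing, and deleting it leaves a correct proof: the forward direction takes $D=BS^Q_{ni}$ as a \emph{hypothesis} (which then forces $BS^Q_{ni}$ to be Cartier, since $D$ is), and your argument for it never invokes purity; the converse uses only that an effective Cartier divisor on an irreducible variety is pure of codimension one, which is Krull's principal ideal theorem and needs no smoothness. Your converse is essentially the paper's own argument. Your forward direction, however, is genuinely different from the paper's: the paper notes that $BS^Q_{ni}=\bigcup BS^R$ over non-reduced $R$, and that each image $\varphi_Q(BS^R)=X^{\mathrm{Dem}(R)}$ has dimension $l(\mathrm{Dem}(R))\leq \sum_i l(v_i)-1\leq l(w)-2$, a purely combinatorial count; you instead prove the general fact that for any proper birational morphism onto a normal variety the image of the exceptional locus has codimension at least two, via finiteness of the generic fibre over a codimension-one component plus Zariski's main theorem. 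Both are valid: the paper's argument is shorter and stays inside Bott-Samelson combinatorics, while yours isolates exactly where normality of $X^w$ enters and applies verbatim to any proper birational map, not just $\varphi_Q$.
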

\begin{proof}
The existence of $D$ follows from Lemma 1.2 and Lemma 3.1.

Suppose $D = BS^Q_{ni}$. Notice that $\codim(\varphi_Q(BS^R))\geq 2$ if $R$ is not reduced. Since $BS^Q_{ni}$ is the union of such $BS^R$, $\codim(\varphi(D))\geq 2$.

Next suppose that $\codim(\varphi_Q(D))\geq 2$. If $D\setminus BS^Q_{ni}\neq \emptyset$, then $\varphi_Q(D)$ has codimension one in $Y$ since $\varphi_Q$ is an isomorphism away from $BS^Q_{ni}$ (and $BS^Q$ and $X^w$ have the same dimension). This contradicts our assumption, hence $D = BS^Q_{ni}$.
\end{proof}

Unfortunately, we cannot claim that $\varphi_Q$ is itself the blow-up of $X^w$ along $Y$ since $\varphi_Q$ may not satisfy the universal property. One barrier to this is when $D$ from Proposition 3.1 is not irreducible. In this case, it might be possible to blow-down one of the irreducible components of $D$ to get another map where the inverse image of $Y$ is a Cartier divisor. To put the mind of the reader at ease as to whether such a situation occurs, the next proposition provides such an example. Therefore, when $D$ necessarily has more than one irreducible component, it is not obvious under what conditions $\varphi_Q$ is a blow-up. In the proposition that follows, recall that $Q =132312$ is a shorthand for $Q = (s_1,s_3,s_2,s_3,s_1,s_2)$ (see Section 1.2). 

\begin{prop}
 The Bott-Samelson resolution $\varphi_Q$ for $Q=132312$ is not a blow-up map.
\end{prop}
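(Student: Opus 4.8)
The plan is to make everything explicit in this small example and then detect the failure by counting the irreducible components of the exceptional locus upstairs against those of its image downstairs.

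First I would identify the target. Computing the Demazure product of $Q=(s_1,s_3,s_2,s_3,s_1,s_2)$ — which agrees with the ordinary product since $Q$ is reduced — gives $\mathrm{Dem}(Q)=4321=w_0$, the longest element of $S_4$. Thus $X^w=G/B$ is the full flag variety, smooth of dimension $6$, and $BS^Q$ is the smooth iterated $\mathbb{P}^1$-bundle of the same dimension, so $\varphi_Q$ is a proper birational morphism between smooth projective sixfolds by Lemma 1.2. Next I would pin down $BS^Q_{ni}=\bigcup_{R\text{ not reduced}}BS^R$. Since this locus is a divisor by Proposition 3.1, its components are the codimension-one pieces, namely the $BS^R$ for the maximal non-reduced $R\leq Q$, obtained by deleting a single letter of $Q$ and keeping those deletions that destroy reducedness. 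A direct check shows that deleting the letter in position $1$, $2$, or $6$ leaves a reduced word, while deleting position $3$, $4$, or $5$ yields a non-reduced word; hence $BS^Q_{ni}=D_3\cup D_4\cup D_5$ has exactly three irreducible components.

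I would then compute the images $\varphi_Q(D_i)=X^{\mathrm{Dem}(R_i)}$, obtaining $\varphi_Q(D_3)=X^{2413}$ (codimension $3$), $\varphi_Q(D_4)=X^{4213}$, and $\varphi_Q(D_5)=X^{2431}$ (both codimension $2$). A short Bruhat-order check gives $2413\leq 4213$ and $2413\leq 2431$, so $X^{2413}\subseteq X^{4213}\cap X^{2431}$ and the center is $Y:=\varphi_Q(BS^Q_{ni})=X^{4213}\cup X^{2431}$, a union of exactly two codimension-two smooth Schubert varieties meeting along the codimension-three locus $X^{2413}$. Now I would exploit the factorization of Proposition 3.1, writing $\varphi_Q$ as $BS^Q\xrightarrow{\pi}Bl_Y(X^w)\xrightarrow{\beta}X^w$, and aim to show $\pi$ is not an isomorphism, so that $\varphi_Q$ is not the blow-up along its own center $Y$ (the sense intended in the surrounding discussion). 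The key is a component count: the exceptional divisor of $\beta$ has one irreducible component over each of the two codimension-two components of the reduced center $Y$, hence two components, whereas $\pi^{-1}$ of that divisor equals $\varphi_Q^{-1}(Y)=BS^Q_{ni}$, which has three. Since $\pi$ is proper and birational, it must contract the extra divisor $D_3$, the one lying over the deeper stratum $X^{2413}$. A dimension count confirms consistency: $D_3\to X^{2413}$ has two-dimensional generic fibre, whereas blowing up $Y$ along its codimension-two components produces only one-dimensional $\mathbb{P}^1$-fibres over the generic points of $X^{4213}$ and $X^{2431}$; so $D_3$ cannot be part of the exceptional divisor of $\beta$ and is genuinely contracted. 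Hence $\pi$ is not an isomorphism and $\varphi_Q\neq Bl_Y(X^w)$.

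The main obstacle is making this count rigorous at the last step: I must verify that blowing up the reduced, reducible center $Y=X^{4213}\cup X^{2431}$ really produces only two exceptional components and no spurious third divisor over the intersection $X^{2413}$. I would establish this by a local analysis of $\beta$ near the generic point of $X^{2413}$, showing that the two smooth codimension-two centers meet there cleanly enough that the Rees-algebra blow-up acquires no embedded exceptional component; alternatively, I would construct the contraction of $D_3$ directly, exhibiting an intermediate variety $Z$ with $BS^Q\to Z\to X^w$ that contracts $D_3$ while keeping the preimage of $Y$ an effective Cartier divisor, which by the universal property of blow-ups forces $Z=Bl_Y(X^w)$ and hence $BS^Q\neq Bl_Y(X^w)$. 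Either route reduces the problem to a concrete computation in Kazhdan–Lusztig coordinates near $e_{2413}$, of the type illustrated in Section 1.1.
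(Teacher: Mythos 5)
Your setup (the three non-reduced single-deletion components of $BS^Q_{ni}$, their images $X^{2413},X^{4213},X^{2431}$, and $Y=X^{4213}\cup X^{2431}$) is correct and matches the paper's computation, but your proof has a scope gap that is fatal to the statement as written. You only argue that $\varphi_Q$ is not the blow-up along the \emph{reduced} center $Y=\varphi_Q(BS^Q_{ni})$, declaring this ``the sense intended in the surrounding discussion.'' It is not: the proposition asserts $\varphi_Q$ is not $Bl_C(X^{4321})$ for \emph{any} center $C$, and the paper's proof begins by supposing $\varphi_Q$ is a blow-up along an arbitrary $C$. This matters because distinct ideal sheaves supported on $Y$ give genuinely distinct blow-ups: for instance, blowing up the product ideal $\mathcal{I}(X^{4213})\cdot\mathcal{I}(X^{2431})$, or a non-reduced structure on $Y$, typically \emph{does} produce a third exceptional component over $X^{2413}$, so your two-versus-three component count cannot rule such centers out, and classifying all ideals supported on $Y$ is hopeless by direct computation. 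The paper sidesteps this entirely with the universal property: assuming $\varphi_Q=Bl_C$, the second Bott-Samelson map $\varphi_{Q'}$ for $Q'=(s_1s_3s_2s_3,s_1,s_2)$ has $\varphi_{Q'}^{-1}(C)$ Cartier (as $BS^{Q'}$ is smooth), hence factors through $\varphi_Q$ via some $\pi:BS^{Q'}\to BS^Q$; since $\pi$ pulls divisors back to divisors, the three components of $BS^Q_{ni}$ would force at least three components in $BS^{Q'}_{ni}$, which has only two --- a contradiction that is uniform in $C$.

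Even for your weakened statement, the proof is incomplete: the crucial claim that $Bl_Y(X^{4321})$ has exactly two exceptional components (no spurious divisor over $X^{2413}$) is exactly what you flag as ``the main obstacle,'' and it is deferred to an unspecified local analysis. That claim is plausible --- near a generic point of $X^{2413}$ one expects the clean local model $\mathcal{I}=(x_1,x_2)\cap(x_1,x_3)=(x_1,x_2x_3)$, whose blow-up has no component over the intersection by a fibre-dimension count --- but verifying that the two smooth codimension-two Schubert varieties actually meet cleanly (rather than, say, tangentially) along $X^{2413}$ is a genuine computation in Kazhdan--Lusztig coordinates that you have not carried out. So as it stands the proposal proves neither the proposition nor its weakened form; if you want to salvage your route, the cleanest fix is to replace the direct analysis of $Bl_Y$ with the paper's universal-property comparison against a second resolution, which requires no knowledge of the blow-up's geometry at all.
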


\begin{proof}
  \iffalse
  It can be checked that $BS^Q_{ni}$ is a union of three codimension 1 sub Bott-Samelson varieties, namely

 $$BS^{13-312}\text{, } BS^{132-12} \text{ and } BS^{1323-2}$$
 These three are necessarily contained in any choice of Cartier divisor $D$ which contains $BS_{ni}^Q$, and if $Y=\varphi_Q(D)$, then it is clear that $\varphi_Q^{-1}(Y)=D$. Now consider the following commuting diagram
 \[
 \begin{tikzcd}
 BS^{Q} \arrow{r}{\psi} \arrow[swap]{dr} & BS^{Q'} \arrow{d} \\
 & X^{4321}
 \end{tikzcd}
 \]
 where $Q' =(s_1s_3s_2s_3s_1,s_2)$. It's not hard to check that $\varphi_{Q'}^{-1}(Y)$ is the union of two codimension 1 sub Bott-Samelson varieties, namely $BS^{R'}$ for $R'=(s_1s_3s_2s_1,s_2),(s_1s_3s_2s_3,s_2)$, and that the image of these two is $Y$. Indeed the map $\psi$ collapsed down the $BS^{13-312}$ component of $BS^Q_{ni}$.  Since any blow-up is an isomorphism away from its center, any choice of center must necessarily contain the images of the three $BS^R$ listed above. Indeed, suppose that $\varphi_Q$ was a blow-up along some center $C$ which contains $\varphi_Q(BS^Q_{ni}$). Then $\varphi^{-1}_Q(C)$ would be the union of the three $BS^R$ and possibly some other divisor $E$. Then $\varphi^{-1}_{Q'}(C)$ will be the union of the two $BS^{R'}$ as well as $\psi(E)$. In particular, the diagram will hold true for any choice of center which contains the image of $BS^Q_{ni}$. This shows that $\varphi_Q$ does not satisfy the universal property, and hence is not a blow-up map.
 \fi
 
 It can be checked that $BS^Q_{ni}$ is a union of three codimension 1 sub Bott-Samelson varieties, namely
 
 $$BS^{13-312}\text{, } BS^{132-12} \text{ and } BS^{1323-2}.$$
 
 Suppose that $\varphi_Q$ is a blow-up of $X^{4321}$ along some center $C$. Since any blow-up is an isomorphism away from its center, $C$ necessarily contains $\varphi_Q(BS^Q_{ni})$. Then $\varphi^{-1}_Q(C)$ is the union of the three $BS^R$ listed above and possibly some other divisor $E$. Now consider the following commuting diagram
 \[
 \begin{tikzcd}
 BS^{Q} \arrow{r}{\psi} \arrow[swap]{dr}{\varphi_{Q}} & BS^{Q'} \arrow{d}{\varphi_{Q'}} \\
 & X^{4321}
 \end{tikzcd}
 \]
 where $Q' =(s_1s_3s_2s_3,s_1,s_2)$. It's not hard to check that $\varphi_{Q'}^{-1}(C)$ is the union of two codimension 1 sub Bott-Samelson varieties, namely $BS^{R'}$ for $R'=(s_1s_3s_2,s_1,s_2),(s_1s_3s_2s_3,e,s_2)$, as well as $\psi(E)$ (the map $\psi$ has collapsed down the $BS^{13-312}$ component of $BS^Q_{ni}$). Note that both $BS^Q$ and $BS^{Q'}$ are smooth, so all the above divisors are automatically Cartier. By the universal property of blow-up maps, there exists a fibre diagram:
  
 \[
 \begin{tikzcd}
 BS_{ni}^{Q'}\cup \psi(E) \arrow{r} \arrow[swap]{d} & BS^{Q'} \arrow{d}{\pi} \\
 BS_{ni}^Q\cup E \arrow[swap]{d}\arrow{r} & BS^Q\arrow{d}{\varphi_Q}\\
 C \arrow{r} & X^w
 \end{tikzcd}
 \]
 
 Since $BS^Q$ and $BS^{Q'}$ have the same dimension, the preimage of a divisor under $\pi$ must again be a divisor (it clearly cannot be all of $BS^{Q'}$). Therefore, the preimage of each irreducible component of $BS^Q_{ni}$ must be a union of irreducible components in $BS^{Q'}_{ni}$. There are however 3 components in $BS^{Q}_{ni}$ and only 2 components in $BS^{Q'}_{ni}$, a contradiction.

\end{proof}

Note that  $BS^Q_{ni}\cup E$ from the proposition is an example of a choice for $D$ from the beginning of the section. Interestingly enough, choosing another reduced expression $Q$ for $4321$ will avoid many of the problems mentioned in Proposition 3.2.

Since we cannot draw a similar factorization diagram for $BS^{Q'}$ as we did for $BS^Q$, it might be true that $\varphi_{Q'}$ is a blow-up, and that choosing our Cartier divisor $D$ with more than one component is acceptable (although this is not immediately clear).

Let us now consider those cases for which an irreducible $D$ can be chosen. If $Y = \varphi_Q(D)$ is a Cartier divisor in $X^w$, the blow-up of $X^w$ along $Y$ is an isomorphism and clearly won't be the Bott-Samelson resolution (except in the rare instance that $\varphi_Q$ was already an isomorphism). The next example shows how obvious choices of $D$ can potentially lead to this problem.

\begin{example} \normalfont In Section \ref{Resolution Algorithm}, we saw that $\varphi_Q$ for $Q={(r_1,r_2r_3r_2r_1)}$ was a strict Bott-Samelson resolution for $X^{4231}$ (whose singular locus is $X^{2143}$). The natural choice for any $D$ from Proposition 3.1 is some combination of codimension 1 sub Bott-Samelson varieties. These $B$-invariant divisors after all generate the effective cone of $BS^{Q}$ since $Q$ is reduced (see \cite{Anderson} for commentary on divisor classes of Bott-Samelson varieties). It is not hard to check however that $R$ is reduced for any codimension 1 $BS^R$ that contains the preimage of $X^{2143}$. In other words, $\codim(BS^Q_{ni})>1$ and any choice of $D$ will result in $\codim(\varphi_Q(D))=1$ by Proposition 3.1. Since $X^{4231}$ is not factorial, we must consider whether $\varphi_Q(D)$ is a Cartier divisor. One can verify that the image of any codimension 1 $BS^R$ is in fact not Cartier. We have thus found an irreducible $D$ containing $BS^Q_{ni}$ for which $\varphi_Q(D)$ is a viable center. \end{example}

 On the other hand, if $Y$ is not Cartier then the blow-up map is not trivial. By the proposition, we have the following fibre diagram:

\[
\begin{tikzcd}
D \arrow{r} \arrow[swap]{d} & BS^Q \arrow{d}{\pi} \\
E \arrow[swap]{d}\arrow{r} & Bl_Y(X^w)\arrow{d}\\
Y \arrow{r} & X^w
\end{tikzcd}
\]

Then both $D$ and $E$ are irreducible Cartier divisors. Since $\varphi_Q$ and the blow-up map are proper and birational, $\pi$ must also be proper and birational. If we know that $Bl_Y(X^w)$ is normal (which is equivalent to  $\mathcal{I}(Y)^k$ being integrally closed for all $k>0$), then any finite birational map to it must be an isomorphism. Under these conditions, the problem has been reduced to checking whether $\pi$ is quasi-finite to conclude that $\varphi_Q$ is isomorphic to a blow-up. This boils down to understanding the fibres of $\varphi_Q$ and comparing that to the fibres of the blow-up. Even this question proves difficult for all but the simplest cases. See \cite{Gaussent} and \cite{Gaussent2} for more information on the fibres of a Bott-Samelson resolution.

Let us conclude this section with a final thought. If $D$ can be chosen to be $BS^{R}$ where $R$ is not reduced, then $Y=\varphi_Q(D)$ is a Schubert subvariety since $\varphi_Q$ is $B$-equivariant and $BS^R$ is $B$-invariant. The question of blowing-up $X^w$ along a $B$-invariant subspace might provide extra information by extending the $B$-action to the total transform. This prompts the question, when is a Hironaka resolution of $X^w$ actually $B$-equivariant? Such an equivariant resolution of singularities exists and is discussed in Proposition 3.9.1 of \cite{Kollar}.

\section{Strict Bott-Samelson Resolutions}\label{strict}

In this section we will drop our previous attempts to view $\varphi_Q$ as a blow-up, and instead focus on whether it is possible to resolve the singularities of $X^w$ by a sequence of strict $\varphi_Q$ maps. Let us first show that it is enough to restrict our attention to generalized $BS^Q$ maps for $Q=(w_1,w_2)$. 

Let $R=(v_1,v_2,v_3)$. We can view $\varphi_R$ as $\varphi_{R''}\circ(\varphi_{R'}\widetilde{\times} \id_{v_3})$ where $R' = (v_1,v_2)$ and $R'' =(v_1*v_2, v_3)$ (each is surjective). The map $\varphi_{R'}\widetilde{\times} \id_{v_3}$ takes $[g_1,g_2,g_3]$ to $[g_1g_2,g_3]$ (the notation $\widetilde{\times}$ is taken from the theory of affine Grassmannians which we will not stop to define here). Then $\varphi_R$ being strict implies that $\varphi_{R'}$ and $\varphi_{R''}$ are also strict. The same is true if $R' = (v_2,v_3)$ and $R''=(v_1,v_2*v_3)$. By induction we get the following result:

\begin{lem}
 Let $X^w$ be a singular Schubert variety. Any strict Bott-Samelson resolution of $X^w$ by a sequence of $\varphi_{Q_k}$ can be factored into a sequence of strict $\varphi_{R_i}$ maps where each $R_i= (v^{i}_1,v^i_2)$.
\end{lem}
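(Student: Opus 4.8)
The plan is to shorten the words one entry at a time and check that strictness is inherited by each factor, then to induct on word length. The hypothesis supplies a sequence of strict maps $\varphi_{Q_1},\varphi_{Q_2},\dots$ whose composite resolves $X^w$, so it suffices to factor a single strict map $\varphi_R$ with $R=(v_1,\dots,v_m)$ into a finite chain of strict length-two maps; concatenating these chains over the $Q_k$ then gives the statement. I would argue by induction on $m$, the cases $m\le 2$ being vacuous. For the inductive step I use the factorization already recorded before the lemma, namely $\varphi_R=\varphi_{R''}\circ(\varphi_{R'}\,\widetilde{\times}\,\id)$ with $R'=(v_1,v_2)$ of length two and $R''=(v_1*v_2,v_3,\dots,v_m)$ of length $m-1$; note that if $R$ is reduced then so are $R'$ and $R''$, and $\varphi_{R''}$ again maps onto $X^w$, so the inductive hypothesis will apply to it.

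Write $f=\varphi_{R'}\,\widetilde{\times}\,\id$ and $g=\varphi_{R''}$; by Lemma 1.1 and Lemma 1.2 both are surjective, proper, birational maps of normal varieties. The engine of the proof is a purely formal fact: strictness of the composite $g\circ f$ forces strictness of each factor. Let $U$ be the smooth locus of $X^w$ and set $V=g^{-1}(U)$, so that strictness of $\varphi_R$ means that $g\circ f$ restricts to an isomorphism $f^{-1}(V)\to U$. Putting $s=(g\circ f|)^{-1}$ and $r=s\circ g|_V$, one checks directly that $r\circ f|=\id$ on $f^{-1}(V)$, so $f|\colon f^{-1}(V)\to V$ is a section of the separated morphism $r$ and hence a closed immersion; since $f$ is surjective, $f|$ is a surjective closed immersion onto the reduced variety $V$ and is therefore an isomorphism. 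Consequently $g|_V=(g\circ f|)\circ(f|)^{-1}$ is an isomorphism as well. Thus $g=\varphi_{R''}$ is an isomorphism over $U$, i.e. strict, and $f$ is an isomorphism over $V$.

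It then remains to convert ``$f$ is an isomorphism over $V$'' into ``$\varphi_{R'}$ is strict.'' Here I would invoke the bundle description used in the proof of Lemma 1.1: the map $f=\varphi_{R'}\,\widetilde{\times}\,\id$ is fibrewise, over the trailing coordinates, literally the map $\varphi_{R'}$, while $BS^{R''}$ is an iterated Schubert-variety bundle over $X^{v_1*v_2}$. Since this bundle is locally trivial and the formation of the singular locus is local, the smooth locus of the base pulls back compatibly, so $f$ being an isomorphism over $V$ is equivalent to $\varphi_{R'}$ being an isomorphism over the smooth locus of $X^{v_1*v_2}$; that is, $\varphi_{R'}$ is strict. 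Having produced $\varphi_{R'}$ (length two, strict) and $\varphi_{R''}$ (length $m-1$, strict), the inductive hypothesis factors $\varphi_{R''}$ into a sequence of strict length-two maps, and prepending $\varphi_{R'}\,\widetilde{\times}\,\id$ completes the factorization of $\varphi_R$.

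The genuinely delicate step is this last translation between $f$ and $\varphi_{R'}$: one must confirm that the open set $V$ over which $f$ is an isomorphism is exactly the bundle-preimage of the smooth locus of $X^{v_1*v_2}$, so that no singularities are concealed in the fibre directions, and that each factor is again a bona fide generalized Bott-Samelson map of the stated form. The formal composition argument (the section-to-closed-immersion trick) is routine; the matching of smooth loci through the two bundle structures is where the care is needed. I would also note that the symmetric choice $R'=(v_2,v_3)$, $R''=(v_1,v_2*v_3)$ is handled identically by the same argument and may be the more convenient one for keeping the intermediate words reduced.
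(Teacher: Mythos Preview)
Your approach is exactly the one the paper sketches in the paragraph preceding the lemma: factor $\varphi_R$ as $\varphi_{R''}\circ(\varphi_{R'}\,\widetilde\times\,\id)$ and induct on the length of $R$. Your treatment of the factor $g=\varphi_{R''}$ is more careful than the paper's one-line assertion; the section/closed-immersion argument showing that strictness of $g\circ f$ forces $g$ to be an isomorphism over $U$ and $f$ to be an isomorphism over $V=g^{-1}(U)$ is correct and cleanly written.

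The difficulty you flag in the last paragraph is real, however, and your proposed resolution does not close it. You want to pass from ``$f$ is an isomorphism over $V$'' to ``$\varphi_{R'}$ is an isomorphism over the smooth locus of $X^{v_1*v_2}$'', and you propose to do this by identifying $V$ with the bundle-preimage (under the projection $\pi:BS^{R''}\to X^{v_1*v_2}$) of the smooth locus of the base. But $V$ is defined as $g^{-1}(U)$ where $g=\varphi_{R''}$ is the \emph{multiplication} map, not the bundle projection $\pi$; these are different morphisms, and there is no reason for $g^{-1}(\mathrm{Smooth}(X^w))$ to agree with $\pi^{-1}(\mathrm{Smooth}(X^{v_1*v_2}))$. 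What one actually extracts from the bundle picture (using the section $p\mapsto[p,e,\dots,e]$, which satisfies $g\circ\sigma=\id$) is only that the non-isomorphism locus of $\varphi_{R'}$ lies in $\mathrm{Sing}(X^w)\cap X^{v_1*v_2}$, and that set need not be contained in $\mathrm{Sing}(X^{v_1*v_2})$: for instance $X^{2143}$ is smooth yet lies entirely in $\mathrm{Sing}(X^{4231})$. So the bundle argument, as you have outlined it, does not by itself yield strictness of $\varphi_{R'}$.

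To be fair, the paper does not justify this step either; it simply asserts that strictness of $\varphi_R$ implies strictness of $\varphi_{R'}$. You have correctly located the point where the argument is incomplete, but the mechanism you propose for filling it does not work as stated.
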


In other words, it is enough to check strictness for $\varphi_Q$ where $Q$ is a 2-tuple. Given a Schubert variety $X^w$, and given a reduced $w = s_{i_1}s_{i_2}...s_{i_p}$, we ask whether we  can we write $w=w_1*w_2$ where $w_1 = s_{i_1}...s_{i_q}$ and $w_2=s_{i_{q+1}}...s_{i_p}$ such that

$$\overline{Bw_1B}\times^B \overline{Bw_2B}/B \rightarrow X^w$$
is strict? Each $\overline{Bw_iB}/B$ is a Schubert variety, and if it is not smooth we hope that we can continue this process until we have a smooth Bott-Samelson variety. Therefore, we wish to find

$$\varphi_{Q_i}: BS^{Q_i}\rightarrow \overline{Bw_iB}/B$$
for each $i$, where the product of these two maps provides a strict map onto $\overline{Bw_1B}\times^B \overline{Bw_2B}/B$. Notice however that the $Q_i$ might use a different reduced form than that used in $w_i$. This process is highly dependent on how we write the word in simple reflections as only certain reduced words will provide strict resolutions. For this reason finding a strict resolution of singularities is not just about finding some $\overline{Bw_1B}\times^B ...\times^B \overline{Bw_kB}/B$ with the usual map onto $X^w$, but rather some sequence of Bott-Samelson maps whose composition may not be the standard product map $\varphi_Q$.

Let us make one final reduction which will make it possible to use a computer search to find strict $\varphi_Q$. We will first need a technical lemma which we thank Allen Knutson for providing.

\begin{lem}
Suppose that $X\hookrightarrow \mathbb{P}^n$ is an equivariant inclusion with respect to a torus action $T$. If $\dim X>0$ (and $X\neq \emptyset$), then $|X^T|>1$.
\end{lem}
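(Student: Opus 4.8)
The plan is to exploit the fact that a torus acting on projective space can be diagonalized, so that the action on $\mathbb{P}^n$ is given in homogeneous coordinates by a collection of characters (weights) $\chi_0,\dots,\chi_n \in \mathrm{Hom}(T,\mathbb{C}^*)$, with $T$ acting by $t\cdot[z_0:\cdots:z_n] = [\chi_0(t)z_0:\cdots:\chi_n(t)z_n]$. The $T$-fixed points of $\mathbb{P}^n$ are exactly the coordinate points $e_i=[0:\cdots:1:\cdots:0]$ (together with fixed points coming from coordinate subspaces on which several weights agree), and the $T$-fixed points of $X$ are precisely $X \cap (\mathbb{P}^n)^T$, i.e. the fixed points of the ambient space that happen to lie on $X$. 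So the first step is to record this normal form and to note that $|X^T|$ counts exactly those fixed points of $\mathbb{P}^n$ lying in $X$.

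Next I would reduce to the statement $|X^T|\geq 1$, which is standard (a projective $T$-variety over $\mathbb{C}$ always has a fixed point, e.g. by Borel's fixed point theorem applied to the closure of a generic orbit, or by the weight-limit argument below), and then argue that one fixed point is not enough once $\dim X>0$. The key idea is a one-parameter-subgroup limit argument. Choose a generic point $x\in X$ and a generic cocharacter $\lambda:\mathbb{C}^*\to T$; then $\lim_{s\to 0}\lambda(s)\cdot x$ and $\lim_{s\to\infty}\lambda(s)\cdot x$ both exist in the projective variety $X$ and are $T$-fixed (since they are fixed by $\lambda(\mathbb{C}^*)$, and for generic $\lambda$ being $\lambda$-fixed forces being $T$-fixed). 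Concretely, if $x=[z_0:\cdots:z_n]$ with support set $S=\{i:z_i\neq 0\}$, then the two limits are the coordinate points $e_j$ and $e_k$ where $j$ and $k$ index the minimal and maximal values of $\langle\lambda,\chi_i\rangle$ over $i\in S$.

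The crux is then to show these two limit points are \emph{distinct}, which is exactly where $\dim X>0$ is used. If every point of $X$ had all its generic limits coincide, the weights $\chi_i$ restricted to the support of a generic point would all pair equally with $\lambda$; making this precise, I would argue that if $|X^T|=1$ then every $T$-orbit closure in $X$ would have both of its limit points equal, forcing the whole $T$-orbit through a generic $x$ to be a single point, hence $x$ itself $T$-fixed. Since this holds for generic $x$, and the $T$-fixed locus is closed, $X$ would be contained in its fixed locus, i.e. $X=X^T$ is finite, contradicting $\dim X>0$. Thus at least two distinct fixed points appear.

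I expect the main obstacle to be making the genericity of $\lambda$ precise and ensuring that the two limit points genuinely differ rather than collapsing. The delicate point is that a generic point $x$ may have small support $S$, and one must guarantee that the characters $\{\chi_i : i\in S\}$ are not all equal as elements of $\mathrm{Hom}(T,\mathbb{C}^*)$ after restricting to $\lambda$; if they were all equal for the generic support, then $T$ would act on a dense subset of $X$ through a single character up to scalar, again forcing $X$ into its fixed locus. Handling this cleanly — rather than via an ad hoc coordinate computation — is best done by the clean structural statement that a positive-dimensional projective $T$-variety cannot consist of a single $T$-orbit-closure that is a point, together with the observation that orbit closures of non-fixed points always contain at least two fixed points (their sink and source under a generic cocharacter). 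Once that dichotomy is set up, the conclusion $|X^T|>1$ follows immediately.
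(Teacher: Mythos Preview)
Your argument is correct, and it takes a genuinely different route from the paper's proof. The paper passes to the orbit closure $Y=\overline{T\cdot x}$ of a non-fixed point, normalizes to obtain a projective toric variety $\tilde Y$, and then invokes the polytope description of an ample line bundle together with the orbit--cone correspondence to produce at least two $T$-fixed points with distinct weights on $\mathcal{O}(1)$, which map to distinct fixed points in $Y\subset X$. Your approach bypasses all of this structure theory: you diagonalize the $T$-action on $\mathbb{P}^n$, choose a generic cocharacter $\lambda$ so that $\lambda$-fixed equals $T$-fixed, and observe that for a non-fixed point $x$ the two limits $\lim_{s\to 0}\lambda(s)\cdot x$ and $\lim_{s\to\infty}\lambda(s)\cdot x$ land in $X$, are $T$-fixed, and have disjoint coordinate supports (the minimal and maximal weight strata), hence are distinct. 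This is the classical source--sink (Bia\l{}ynicki-Birula) argument and is more elementary; it requires nothing beyond the existence of limits in projective space and a genericity choice for $\lambda$. The paper's toric approach, while heavier, makes the weight structure on the fixed points more explicit via the moment polytope.

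One small point of presentation: your write-up circles around the case split more than necessary. The clean version is simply: if $X=X^T$ we are done since $\dim X>0$ forces $|X|>1$; otherwise pick any $x\in X\setminus X^T$ and run the limit argument once. You do not need to speak of ``generic $x$'' or argue by contradiction through $X=X^T$ being finite. Also, your parenthetical about fixed points in coordinate subspaces where weights agree is correct and worth keeping, since without it the claim that the $T$-fixed points are ``exactly the coordinate points'' is false in general.
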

\begin{proof}
	If $X=X^T$, then the result is obvious. Otherwise, choose an $x\in X\setminus X^T$. Let $Y = \overline{T\cdot x}$ and $\nu:\tilde{Y}\rightarrow Y$ be the normalization of $Y$. Then $\tilde{Y}$ is a projective toric variety. Furthermore, since $x$ is not a fixed point by assumption, $\dim \tilde{Y}\geq 1$.
	
	We wish to use the $T$-fixed points of $\tilde{Y}$ to show the existence of at least two $T$-fixed points for $Y$. Let $\mathcal{L}$ be the pullback of the sheaf $\mathcal{O}(1)$ in the diagram:
	\[
	\begin{tikzcd}
	\mathcal{L} \arrow{r} \arrow[swap]{d} & \mathcal{O}(1) \arrow{d} \\
	\tilde{Y} \arrow{r}{\nu} & Y
	\end{tikzcd}
	\]
	Since $\nu$ is a finite map, $\mathcal{L}$ is an ample line bundle over $\tilde{Y}$. Then $\tilde{Y}$ is a toric variety associated to a lattice polytope $P$ (see \cite[Thm 6.2.1]{CLS}). In fact, $P$ is the convex hull of the weights of the action of $T$ on the fibres $\mathcal{L}|_f$ for $f\in \tilde{Y}^T$. 
	
	Note that the weight of the $T$-action on $\mathcal{L}|_f$ is the same as the weight of $T$ acting on $\mathcal{O}(1)|_{\nu(f)}$. Therefore, distinct $T$-fixed points of $\tilde{Y}$ correspond to distinct weights for the $T$-action on the line bundles, which in turn corresponds to distinct $T$-fixed points for $Y$. It is enough then to show that $|\tilde{Y}^T|>1$ to imply that $|Y^T|>1$.
	
	By the orbit-cone correspondence \cite[Thm 3.2.6]{CLS}, since $\dim\tilde{Y}>0$, $|\tilde{Y}^T|>1$ as required.
\end{proof}

\begin{lem}
	Let $\varphi_{Q}$ be a Bott-Samelson resolution onto $X^w$ with $Q$ reduced. Then $\varphi_Q$ is an isomorphism iff it is a bijection on $T$-fixed points.
\end{lem}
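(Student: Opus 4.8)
The forward direction is immediate: an isomorphism is in particular $T$-equivariant (indeed $\varphi_Q$ is $B$-equivariant and $T\subset B$), so it restricts to a bijection $(BS^Q)^T\to (X^w)^T$. The content lies in the converse, and the plan is to deduce from the fixed-point hypothesis that every fibre of $\varphi_Q$ is finite, then invoke Zariski's Main Theorem. First I would record the structural facts: since $Q$ is reduced, Lemma 1.2 gives that $\varphi_Q$ is proper, birational, and surjective onto $X^w$, and we know $X^w$ is normal. A proper birational morphism onto a normal variety is an isomorphism as soon as it is quasi-finite (it is then finite, and a finite birational morphism to a normal variety is an isomorphism). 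So it suffices to prove that every fibre of $\varphi_Q$ is finite.

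The key reduction is to the fibres over $T$-fixed points. Since $\varphi_Q$ is $T$-equivariant, each fibre $F_v:=\varphi_Q^{-1}(e_v)$ over a fixed point $e_v$ (for $v\leq w$) is a nonempty closed $T$-invariant subvariety of $BS^Q$, and because $T$ fixes $e_v$, the $T$-fixed points of $F_v$ are exactly $F_v\cap (BS^Q)^T$. The bijection hypothesis, combined with surjectivity, says precisely that each such $F_v$ contains exactly one $T$-fixed point. This is where Lemma 4.3 enters: fixing a $T$-equivariant projective embedding of $BS^Q$ (available since $BS^Q$ is a normal projective $T$-variety, so a power of an ample bundle is $T$-linearizable) and restricting it to $F_v$, one sees that $\dim F_v>0$ would force $|F_v^T|>1$ by Lemma 4.3, contradicting the hypothesis. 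Hence $\dim F_v=0$ and $F_v$ is finite.

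It remains to pass from fixed points to arbitrary points, and this is where I would use $B$-equivariance together with the Bruhat decomposition $X^w=\bigsqcup_{v\leq w}Be_v$. Every point $y\in X^w$ lies in some cell $Be_v$, so $y=b\cdot e_v$ for some $b\in B$ and $v\leq w$; then $\varphi_Q^{-1}(y)=b\cdot F_v$, which is isomorphic to $F_v$ and hence finite. Thus all fibres are finite, $\varphi_Q$ is quasi-finite, and the reduction above completes the argument.

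The main obstacle I anticipate is the bookkeeping around the fixed-point count: correctly identifying $F_v^T$ with the preimages of $e_v$ inside $(BS^Q)^T$, so that ``bijection on $T$-fixed points'' translates cleanly into ``exactly one $T$-fixed point per fibre'', and securing a genuinely $T$-equivariant projective embedding so that Lemma 4.3 applies verbatim to each $F_v$. Everything else is a routine assembly of properness, birationality, and normality.
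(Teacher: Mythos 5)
Your proof is correct and follows essentially the same route as the paper: reduce to quasi-finiteness via properness, birationality, and normality of $X^w$; show fibres over $T$-fixed points are zero-dimensional using the torus fixed-point lemma (Lemma 4.2 in the paper, which you cite as Lemma 4.3); and propagate finiteness to all fibres via $B$-equivariance and the Bruhat cell decomposition. Your extra care about $T$-linearizing the projective embedding is a slightly fuller justification of a step the paper treats tersely, but the argument is the same.
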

\begin{proof}
	One direction is obvious. For the other direction, observe that $\varphi_Q$ is birational and $X^w$ is normal, so it is enough to show that $\varphi_Q$ is finite to conclude that it is an isomorphism. Since $\varphi_Q$ is also proper, we only need that $\varphi_Q$ is quasi-finite for the result to hold.
	
	Consider the fibre $Z_v = \varphi_Q^{-1}(e_v)$, which is a projective subvariety of $BS^Q$. The embedding of $Z_v$ into projective space is equivariant with respect to the torus action on $Z_v$ (since $\varphi_Q$ is $B$-equivariant). Under these conditions, dim($Z_v)>0 \Rightarrow |Z_v^T|>1$ by Lemma 4.2. Since $|Z_v^T|=1$ by assumption, we conclude that dim($Z_v)=0$.
	
	Since $\varphi_Q$ is $B$-equivariant, the fibre of a point in $Be_vB/B$ is a $B$-translate of $Z_v$.  These Bruhat cells cover $X^w$, so $\varphi_Q$ is quasi-finite.
\end{proof}

It is therefore enough to check strictness on $T$-fixed points. The map

$$\overline{Bw_1B}\times^B \overline{Bw_2B}/B \rightarrow X^w$$
will be strict iff the product map on Bruhat intervals is strict:

$$[e,w_1]\times[e,w_2] \rightarrow [e,w].$$

\begin{example}
	\normalfont Let us try and strictly resolve $X^{3412}$, the other singular Schubert variety for the $n=4$ case (the variety $X^{4231}$ was discussed in section \ref{Resolution Algorithm}). The word $s_2s_1s_3s_2$ is a reduced expression for this permutation. Taking $Q=(s_2,s_1s_3s_2)$, we want to know if $\varphi_Q$ is strict. By Lemma 4.3, it is enough to check this on $T$-fixed points, so consider the product map

$$[e,s_2]\times[e,s_1s_3s_2] \rightarrow [e,s_2s_1s_3s_2].$$

The map is obviously 2-to-1 over $\{e,s_2\}$. Checking the cardinalities of the intervals, it is not hard to see that the map is a bijection everywhere else. By \cite{Woo-Yong}, $X^{3412}$ is only singular on $X^{1324}$, or $\{e,s_2\}$ in terms of reduced words for the $T$-fixed points. Therefore $X^{3412}$ can be resolved strictly.
\end{example}

The problem of finding strict Bott-Samelson resolutions has its own difficulties, and like the previous section, the answer is not very satisfying. We would like to say that there always exists a $Q=(w_1,w_2)$ such that $\varphi_Q$ is strict. This is not the case, however, since there are permutations for which any choice of $Q$ will not be strict (see below). This leads to a further complication. What if initially there exists a $Q=(w_1,w_2)$ such that $\varphi_Q$ is strict, but then $w_2$ is a reduced word for one of these non-strict resolutions?

Another fundamental barrier to proving general results in this section is the lack of tools for identifying the singularities of $X^w$ based on a reduced word for the permutation $w$. Indeed interval pattern embeddings for permutations don't translate very well into criteria involving reduced words. Therefore, the verification of which permutations possess a strict desingularization had to proceed using a computer search.

\subsection{The $n=5$ Case}\label{n5}

Consider $X^w$ for $w\in S_5$. There are certainly more cases to consider than the two from $n=4$. A computer check through all the possibilities (this potentially means every division of every reduced word for $w$) reveals that the only problematic Schubert variety is $X^{45312}$. The permutation $45312$ can be written in simple reflections as $s_2s_3s_2s_1s_4s_2s_3s_2$, and no division will provide a strict resolution. We can check that $X^{45312}$ is smooth at $X^{21345}$ and $X^{12354}$ (in reduced words this is $s_1$ and $s_4$ respectively). 

Any division of  $s_2s_3s_2s_1s_4s_2s_3s_2$ into two will produce a $\varphi_Q$ with a fibre over $s_4$. This is simply because any choice of $Q=(w_1,w_2)$ must satisfy  $s_2 \leq w_1,w_2$. Therefore, $(s_2,s_4s_2)$ or $(s_2s_4,s_2)$ will be a fibre over $s_4$ (in addition to the obvious $(e,s_2)$ or $(s_2,e)$). A computer check shows that no other reduced word works. In fact, it is not too hard to convince yourself of this by hand.

A priori, it is possible for some other permutation $v$ to be divided in such a way that the only resolution is $$\overline{Bv_1B}\times^B \overline{Bv_2B}/B \rightarrow X^v$$ where $\overline{Bv_1B}/B = X^{45312}$ (and therefore no subsequent divisions are possible). However, this would mean that some subword of $v$ was a reduced word for $45312$. That is, it would require $v > v_1$ in the Bruhat order. 

Luckily, the only permutations above $w$ in Bruhat order are $54312,\\
 45321, 54321$ which are all smooth already (and hence wouldn't be subject to this division process). We have thus shown:

\begin{prop}
Let $X^w$ be a singular Schubert variety for $w\in S_5$. There exists a strict Bott-Samelson resolution of singularities for $X^w$ iff  $w\neq 45312$.
\end{prop}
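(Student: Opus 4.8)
The plan is to prove both directions by pushing everything through the two reductions already available: by Lemma 4.1 it suffices to consider $2$-tuples $Q=(w_1,w_2)$, and by Lemma 4.3 strictness is detected on $T$-fixed points. Concretely, a split $w=w_1*w_2$ gives a strict map exactly when the multiplication map on Bruhat intervals $[e,w_1]\times[e,w_2]\to[e,w]$, $(v_1,v_2)\mapsto v_1v_2$, is injective over every $T$-fixed point $e_u$ at which $X^w$ is smooth (surjectivity onto fixed points being automatic from birationality and $B$-equivariance). So I would first record, from the interval pattern criterion of Woo--Yong, the smooth $T$-fixed points of each singular $X^w$ with $w\in S_5$; in particular $X^{45312}$ is smooth at $e_{s_1}$ and $e_{s_4}$. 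After this, every remaining assertion is a purely combinatorial statement about products in Bruhat intervals of $S_5$.

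\textbf{Existence direction.} For singular $w\neq 45312$ I would argue by strong induction on $\ell(w)$, the base case being a simple reflection (already smooth). For the inductive step the heart of the matter is to exhibit \emph{one} reduced word and \emph{one} split point making the multiplication map above injective over the smooth locus; this is the step I would delegate to the finite search, since there is no pattern-avoidance criterion phrased in terms of reduced words. Granting such a strict split $w=w_1*w_2$ with $\ell(w_i)<\ell(w)$, I recurse on $w_1$ and $w_2$. The one danger is that a factor equals $45312$ (which is unresolvable), but this never occurs: a factor $w_i\leq w$ equal to $45312$ would force $45312\leq w$, whereas the only elements of $S_5$ dominating $45312$ are $54312,45321,54321$, all smooth. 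Hence a singular $w\neq 45312$ never dominates $45312$, every singular factor is again $\neq 45312$ and strictly resolvable by induction, smooth factors need no further resolution, and composing the resulting maps resolves $X^w$ strictly.

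\textbf{Non-existence direction.} For the converse I would show that no split of any reduced word for $45312$ is strict. The clean case is $s_2s_3s_2s_1s_4s_2s_3s_2$: every split $(w_1,w_2)$ inherits $s_2\leq w_1$ and $s_2\leq w_2$ from the first and last letters, while the unique $s_4$ lands in exactly one half. If $s_4\leq w_2$ then $s_4s_2\leq w_2$ (the two reflections commute and both lie below $w_2$), so $(e,s_4)$ and $(s_2,s_4s_2)$ are two distinct preimages of the smooth fixed point $e_{s_4}$; if instead $s_4\leq w_1$ then $(s_4,e)$ and $(s_4s_2,s_2)$ are two preimages. Either way injectivity over the smooth locus fails. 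The remaining reduced words of $45312$ no longer begin and end with $s_2$, so this two-line argument does not apply verbatim, and I would dispose of them by the analogous finite check, verifying a repeated fibre over $e_{s_1}$ or $e_{s_4}$ for each split.

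The main obstacle I anticipate is the negative direction made uniform: because interval pattern embeddings do not translate into conditions on reduced words, I do not expect a single slick invariant that rules out every reduced word and split of $45312$ simultaneously, so the proof genuinely rests on the finite verification over all reduced words of $45312$. The positive direction is conceptually safe once the Bruhat-order observation guarantees the recursion avoids $45312$; its only cost is the finite search for a good split at each node of the recursion. I would therefore present the two finite checks explicitly (via the computer search of Section~\ref{strict}) and isolate the structural input that $45312\leq w$ forces $w\in\{45312,54312,45321,54321\}$ as the single fact that makes the induction close.
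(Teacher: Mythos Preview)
Your proposal is correct and follows essentially the same route as the paper: both directions rest on the finite computer search, the hand argument for the reduced word $s_2s_3s_2s_1s_4s_2s_3s_2$ via the forced $s_2$'s and the single $s_4$, and the Bruhat-order observation that the only $w\geq 45312$ in $S_5$ are the smooth permutations $54312,\,45321,\,54321$, which closes the recursion. Your write-up is a bit more explicit about the inductive structure, but the ideas and their use are the same as in the paper.
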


\subsection{The $n=6$ Case}\label{n6}

This case is substantially harder than the last, and even a computer check can take a long time to verify all possibilities. The list of Schubert varieties $X^w$ for which no division of any reduced word for $w$ works in defining a strict $\varphi_Q$ are for 

$$w = 156423, 453126, 456312, 465132, 465312, 546213, 546312,$$
 $$ 564123, 564132, 564213, 564312, 632541, 653421.$$

Unlike the $n=5$ case, there are indeed  $v \geq w$ for which $X^v$ is not smooth. Hence if we started subdividing a word for $v$, we may eventually encounter a $w$ on the list that could not be strictly resolved. To avoid this complication, we will only consider permutations which are not above any of the $w$ on the list.

Because of this simplification, we can now reduce our list to 
$$u= 156423, 453126, 632541$$
since any other $w$ in the list above satisfies $w\geq u$ for some $u$. We therefore have a weaker result than that for $n=5$:

\begin{prop}
Let $X^w$ be a singular Schubert variety for $w\in S_6$. There exists a strict Bott-Samelson resolution of singularities for $X^w$ if  $w\ngeq 156423, 453126 \text{ or } 632541$.
\end{prop}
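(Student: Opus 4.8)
The plan is to promote the $S_5$ argument of Proposition 4.1 to an induction on $\ell(w)$, using the reductions already established: Lemma 4.1 lets us build a resolution out of strict two-step maps $\varphi_Q$ with $Q=(w_1,w_2)$, and Lemma 4.3 lets us test strictness of each such $\varphi_Q$ on the combinatorial product of Bruhat intervals $[e,w_1]\times[e,w_2]\to[e,w]$. Call $w$ \emph{good} if $w\ngeq 156423,453126,632541$, and call a singular $w\in S_6$ \emph{bad} if no division of any reduced word for $w$ produces a strict $\varphi_Q$; the computer search identifies the bad permutations as exactly the thirteen listed. The first step is the combinatorial observation that each of the thirteen satisfies $w\geq u$ for some $u\in\{156423,453126,632541\}$, so that these three are the minimal bad elements. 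It follows immediately that a good permutation is never bad: were $w$ bad it would dominate one of the three minimal elements, contradicting goodness.

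Next I record the downward-closure of goodness under the Bruhat order. If $Q=(w_1,w_2)$ is a nontrivial reduced division of $w$, then $w=w_1*w_2$ forces $w_1,w_2\leq w$ with $\ell(w_1),\ell(w_2)<\ell(w)$; by transitivity $w_i\geq u$ would give $w\geq u$, so $w$ good implies both factors good, hence both not bad. This is exactly what dissolves the recursion obstruction raised before Lemma 4.1 --- the fear that a division might force a factor $\overline{Bw_iB}/B$ to be an unresolvable Schubert variety. In the $S_5$ case this was handled by observing that the only permutations above $45312$ are smooth; here it is handled uniformly by transitivity, since a bad factor would pull $w$ above a minimal bad element.

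The induction itself is now routine. If $X^w$ is smooth the identity is a strict resolution, giving the base case. If $X^w$ is singular and $w$ is good, then $w$ is not bad, so some reduced word and split point yield $Q=(w_1,w_2)$ with $\varphi_Q$ strict (verified on Bruhat intervals via Lemma 4.3). The factors $w_1,w_2$ are good and strictly shorter, so by the inductive hypothesis $X^{w_1}$ and $X^{w_2}$ admit strict Bott-Samelson resolutions $\psi_1,\psi_2$. Composing $\varphi_Q$ with $\psi_1\widetilde{\times}\psi_2$, as in the factorization preceding Lemma 4.1, produces a strict resolution of $X^w$: a composite of maps, each an isomorphism over the relevant smooth locus, is again one, because the preimage of a smooth locus under a strict map is smooth and therefore contained in the smooth locus of the source.

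The main obstacle is the first step: securing the complete and correct list of thirteen bad permutations by computer search, and checking that each dominates one of the three displayed minimal elements. This is the genuine content of the proposition; everything downstream is formal. In particular, the downward-closure of goodness is what guarantees the recursion terminates at smooth varieties without ever re-encountering a bad permutation, and the composition step needs nothing beyond Lemmas 4.1 and 4.3. I expect no difficulty there, and I note that only sufficiency is being claimed --- the converse fails, as noted after the main theorem.
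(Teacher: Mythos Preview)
Your proposal is correct and follows essentially the same route as the paper: identify by computer the thirteen permutations admitting no strict two-step division, observe that $\{156423,453126,632541\}$ are the Bruhat-minimal ones among them, and then use downward-closure of the condition $w\ngeq u$ to guarantee that the recursive subdivision never encounters a bad factor. The paper leaves the induction implicit in the discussion preceding the proposition, whereas you spell it out (including the observation that the preimage of the smooth locus under a strict map lies in the smooth locus of the source, which is what makes the composite strict); this added explicitness is fine but does not constitute a different approach.
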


A natural question to ask is whether the condition in the proposition is a closed one. In other words, is it true that $w\geq 156423, 453126 \\ \text{ or } 632541$ results in $X^w$ not having a strict Bott-Samelson resolution? Unfortunately this is not the case.

\begin{example}
	\normalfont  Consider the singular Schubert variety $X^{456123}$. Even though $156423 \leq 456123$, $X^{456123}$ still admits a strict Bott-Samelson resolution of singularities starting with $\varphi_Q$ for $Q = (s_3s_4s_5s_2s_3,s_4s_1s_2s_3)$.

While the second factor $\overline{Bs_4s_1s_2s_3B}/B$ of $BS^Q$ is smooth, the first is not, but can be strictly resolved using $\varphi_R$ with $R=(s_3,s_4s_5s_2s_3)$. 

There are of course $w > 156423, 453126 \text{ or } 632541$ for which $X^w$ cannot be strictly resolved. For example, the only strict Bott-Samelson map  onto $X^{256413}$ using $\varphi_Q$ is with $Q = (s_1, w')$ where $w'$ is some reduced expression for $X^{156423}$ (which we know cannot be strictly resolved). 
\end{example}

As the feasibility of continuing this process diminishes with each increase of $n$, it might be prudent to step back and try to find a pattern.

One observation is that $156423$ and $453126$ both have embedded versions of $45312$. Indeed, given a permutation $\pi$ in $S_m$, we can define a new permutation $\tau(k) \in S_n$ (for $n>m$) by pattern embedding $\pi$ into $123\cdots n$ starting at position $k$ and ending in position $k+m$. We will call this a translation of $\pi$ in $S_n$. For example, if $\pi =45312$, then $\tau(1)=453126$ and $\tau(2) = 156423$. It is not a surprise that translations of problematic permutations are also a problem for higher $n$ as their reduced expressions in $S_n$ still possess the same division problems as they did in $S_m$.

In actuality, the only new permutation for $n=6$ is $632541$. This permutation could be formed by inverting the first and last value, the second and third value, and the third-last and second-last values (of course this is just one interpretation). In $S_n$ this can be written as $\pi_n=n324\cdots(n-3)(n-1)(n-2)1$ for $n>6$ with $\pi_6 = 632541$. Let us now define a list $\Gamma_n$ of permutations we want to avoid. We know $\Gamma_5 = \{45312\}$. Then in general let

$$\Gamma_n = \{\pi_n\}\cup \Gamma^T_{n-1}$$
where $\Gamma^T_{n-1}$ are the translations of permutations in $\Gamma_{n-1}$. For example, 

$$\Gamma_7 = \{1267534, 1564237, 1743652, 4531267, 6325417, 7324651\}$$

 It is worth noting that $|\Gamma_{n+4}| = \frac{n(n+1)}{2}$. Indeed, $\Gamma_{n+4}$ is just the union of the translations of $45312$ and $\pi_6$ up to $\pi_{n+4}$. There are $n$ translations of $45312$ and $n+5-k$ translations of $\pi_k$. This gives a new interpretation of the triangular numbers. Many thanks to the reviewer of this article for making this observation.

We can now formulate a conjecture to generalize our work so far

\begin{conj}
The singular Schubert variety $X^w$ for $w\in S_n$ with $n>4$ can be desingularized using a sequence of strict Bott-Samelson resolutions if $w \ngeq v$ for $v\in \Gamma_n$.
\end{conj}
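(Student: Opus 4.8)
The plan is to prove the conjecture by induction on the length $\ell(w)$, after using Lemmas 4.1 and 4.3 to convert the geometric assertion into a combinatorial one. By Lemma 4.1 it is enough to produce, for each singular $X^w$ with $w\ngeq v$ for every $v\in\Gamma_n$, a single strict $\varphi_Q$ with $Q=(w_1,w_2)$ whose factors $w_1,w_2$ are strictly shorter than $w$ and again satisfy $w_i\ngeq v$ for all $v\in\Gamma_n$; the inductive hypothesis then resolves $X^{w_1}$ and $X^{w_2}$ strictly, and composing with $\varphi_Q$ resolves $X^w$ strictly. By Lemma 4.3, strictness of $\varphi_{(w_1,w_2)}$ is equivalent to the product map on Bruhat intervals
$$[e,w_1]\times[e,w_2]\longrightarrow[e,w],\qquad (a,b)\longmapsto ab,$$
being a bijection over every $e_v$ at which $X^w$ is smooth, the fibre over $v$ being $\{(a,b):a\le w_1,\ b\le w_2,\ ab=v\}$. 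The base case is any $w$ with $X^w$ smooth, where the identity map is already strict.

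Everything thus reduces to one existence statement: if $w$ is singular and $w\ngeq\Gamma_n$, then there is a reduced factorization $w=w_1w_2$ with $\ell(w_1),\ell(w_2)<\ell(w)$ such that (i) the displayed product map is injective over the smooth locus of $X^w$, and (ii) $w_1,w_2\ngeq\Gamma_n$. For (i) I would compare the non-injective locus of the product map with $\mathrm{Sing}(X^w)$ as computed by interval pattern embedding (Woo--Yong, \S 2 and Thm.~6.1): a point $v$ has a non-trivial fibre precisely when it admits two distinct factorizations $ab=v$ with $a\le w_1$ and $b\le w_2$, and one must choose the cut so that every such $v$ lies inside one of the singular intervals. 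To locate a good cut I would exploit the recursive shape $\Gamma_n=\{\pi_n\}\cup\Gamma_{n-1}^T$: apart from the single new permutation $\pi_n$, the obstructions of $S_n$ are translations of those of $S_{n-1}$, so one expects a $w$ avoiding all of them either to be confined to a sub-window, where induction on $n$ via the $S_{n-1}$ statement applies after peeling off a smooth parabolic factor, or to admit a boundary simple reflection that can be split off, shortening $w$ while preserving avoidance.

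For (ii) one must confront the difficulty raised after Lemma 4.3: a first strict cut can be spoiled when a factor turns out to be a reduced word for a permutation lying above some $v\in\Gamma_n$. The induction therefore demands that the chosen factorization propagate the avoidance hypothesis to both $w_1$ and $w_2$ simultaneously, and this coupling of ``strict cut'' with ``inherited avoidance'' is where I expect the genuine work to lie. Concretely I would try to establish a single ``good cut exists'' lemma by treating $\pi_n$ separately from the translated family: show that the new obstruction $\pi_n$ is responsible for exactly one forbidden cut, mirroring the explicit $s_4$ obstruction that rules out $45312$ in the $n=5$ analysis, while every translated obstruction is inherited verbatim from the $S_{n-1}$ window, so that avoidance on the window is exactly what (ii) requires.

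The principal obstacle is the absence of any clean reduced-word criterion for $\mathrm{Sing}(X^w)$, as the text emphasizes: without a combinatorial model for the product fibres over the singular locus one cannot, at present, control uniformly in $n$ both the fibre structure in (i) and the Bruhat positions of the two factors in (ii). I would expect a fully uniform proof to hinge on such a model, and in its absence the inductive step seems likely to fragment into the kind of large case analysis that made the computer verification necessary for $n=7,8$; turning that verification into a structural induction on the window/translation data is, in my view, the crux of the conjecture.
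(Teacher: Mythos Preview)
The statement you are attempting is labeled \emph{Conjecture} in the paper, and the paper does not prove it. What the paper actually establishes is the much weaker Theorem~4.1, ``The conjecture is true for $5\leq n\leq 8$,'' and this is obtained not by a structural argument but by the computer search described in Sections~\ref{n5} and~\ref{n6}: for each such $n$ one enumerates all singular $w\in S_n$, all reduced words for $w$, and all cuts $Q=(w_1,w_2)$, checking strictness on $T$-fixed points via Lemma~4.3 and checking recursive resolvability of the factors. The paper then explicitly lists as future work ``developing techniques for verifying the conjecture without the need for a computer program.''

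Your proposal is therefore not comparable to a paper proof, because there is none. As a research outline it is sensible: the reduction via Lemmas~4.1 and~4.3 to a combinatorial existence statement is exactly what the paper uses to make the computer search feasible, and your identification of the two coupled requirements (i) fibre containment in $\mathrm{Sing}(X^w)$ and (ii) inherited avoidance $w_i\ngeq\Gamma_n$ is precisely the recursion the search implements. But you have not supplied the missing ingredient, and you say so yourself in the final paragraph: without a reduced-word criterion for the singular locus, you cannot uniformly produce a cut satisfying (i) and (ii), and your inductive step remains a hope rather than an argument. In particular, your suggestion that avoidance of the translated obstructions forces $w$ into a sub-window where $S_{n-1}$ induction applies is not correct as stated; a permutation can avoid every element of $\Gamma_{n-1}^T$ while still using all $n$ positions nontrivially, so the ``peel off a smooth parabolic factor'' step need not be available. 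The proposal is thus an honest description of the difficulty, not a proof.
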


\begin{thm}
The conjecture is true for $5\leq n\leq 8$.
\end{thm}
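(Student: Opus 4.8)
The plan is to reduce the statement to an induction on Coxeter length whose inductive step is a finite, per-permutation existence check, and then to carry out that check by computer for the new cases. First I would dispose of $n=5$ and $n=6$ with no extra work: the list $\Gamma_5 = \{45312\}$ is exactly the exceptional permutation of Proposition 4.1, and $\Gamma_6 = \{632541\} \cup \Gamma_5^T = \{632541, 453126, 156423\}$ is exactly the set of minimal bad permutations in Proposition 4.2, so the conjecture for those $n$ is a verbatim restatement of the two propositions. Note also that the conjecture asserts only the sufficient direction, so it is enough to \emph{exhibit} a strict resolution for each eligible $w$; we never have to prove non-existence, which is fortunate since Example 4.2 shows the converse fails. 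All the genuine content therefore lies in $n=7$ and $n=8$.

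The organizing principle I would use is strong induction on $\ell(w)$, and the key structural observation that makes the induction go through is a transitivity argument. If $w = w_1 * w_2$ is a reduced split, then $w_1, w_2 \le w$ in Bruhat order, so $w \ngeq v$ for all $v \in \Gamma_n$ forces $w_i \ngeq v$ for all $v \in \Gamma_n$ as well. Since the split is proper we also have $\ell(w_i) < \ell(w)$, so by the inductive hypothesis each factor $X^{w_i}$ already admits a strict Bott-Samelson resolution. Combining these factor resolutions with a single strict step $\varphi_{(w_1,w_2)}$ yields a strict resolution of $X^w$, the compatibility being exactly what Lemma 4.1 provides. Consequently the entire inductive step collapses to a single claim per permutation: \emph{for each singular $w$ with $w \ngeq \Gamma_n$, there exists at least one reduced word and one split point for which the one-step map $\varphi_{(w_1,w_2)}$ is strict.} This is precisely the statement that the discussion preceding Proposition 4.1 isolated as the only obstruction, since a resolution could be forced onto a bad permutation only if that permutation were Bruhat-below $w$, which we have just excluded.

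The per-permutation existence check is then purely combinatorial and finite, by the two reduction lemmas. By Lemma 4.1 it suffices to consider two-tuples $Q = (w_1, w_2)$, and by Lemma 4.3 such a $\varphi_Q$ is strict exactly when the product map of Bruhat intervals $[e, w_1] \times [e, w_2] \to [e, w]$ is injective over every $T$-fixed point $e_u$ lying in the smooth locus of $X^w$. For each eligible $w \in S_7 \cup S_8$ the program enumerates the reduced words of $w$, splits each at every interior position into a reduced pair $(w_1, w_2)$, computes the singular locus of $X^w$ from the interval pattern embedding criterion recalled in Section 1.3, and then tests whether the multiple-point fibres of the interval product map all sit over singular fixed points. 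If some split passes, the existence claim holds for that $w$; the induction then supplies resolutions of the factors automatically.

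I expect the main obstacle to be computational scale rather than any conceptual gap. For $n=8$ one has $|S_8| = 40320$, the number of reduced words of a single long element is enormous, and the intervals $[e, w]$ grow rapidly, so naively enumerating fibre cardinalities of the interval product map over all splits of all reduced words is prohibitively expensive. Effective pruning will be essential: restricting attention to singular $w$ that are minimal subject to $w \ngeq \Gamma_n$, caching the singular-locus computation, and aborting each permutation's search the instant one strict split is found. Correctly implementing the interval pattern avoidance routine that determines the singular locus is the one place where a subtle error could invalidate the search, so I would cross-check its output against the known $n \le 6$ answers before trusting the $n = 7, 8$ runs. Once the search confirms, for every eligible $w$ in $S_7$ and $S_8$, the existence of a strict single-step split, the induction closes and the conjecture is established for $5 \le n \le 8$.
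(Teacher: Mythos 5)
Your proposal is correct and matches the paper's (largely implicit) proof: the paper states this theorem without a written argument, its content being precisely the verification you describe --- $n=5,6$ disposed of by Propositions 4.1 and 4.2, and for $n=7,8$ a computer search over splits of reduced words into pairs $(w_1,w_2)$, using Lemma 4.1 to reduce to two-tuples, Lemma 4.3 to test strictness on $T$-fixed points via the Bruhat interval product map, interval pattern avoidance for the singular locus, and the same recursive structure you formalize as induction (the paper's Sections 4.1--4.2 make exactly your observation that factors of a split are Bruhat-below $w$, hence also avoid $\Gamma_n$). One caution on your pruning remark: you cannot restrict the search to singular $w$ that are ``minimal'' subject to $w \ngeq v$ for $v\in\Gamma_n$, since the inductive step requires exhibiting a strict split for \emph{every} singular eligible $w$ in $S_7$ and $S_8$, not just the minimal ones; the rest of your described algorithm already does this correctly.
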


Future work towards this problem includes developing techniques for verifying the conjecture without the need for a computer program. This would mean a deeper understanding of how singularities can be more easily read from a reduced word for the permutation. 

The precise geometric description of the singular locus given in \cite{Cortez} might prove useful in studying strict resolutions. In fact, strict resolutions for permutations in $\Gamma_n$ might exist using the constructions in \cite{Cortez2} and \cite{Cortez}. It is also interesting to note that some of the permutations in $\Gamma_n$ appear in the interval pattern embedding description of the singular locus in \cite{Woo-Yong}.  

Furthermore, a better understanding of how Bott-Samelson resolutions compare with a Hironaka resolution may provide a clearer path to creating hybrid desingularization algorithms for similar combinatorial objects.

\bibliography{FinalStrictSchubert_ArxivVersion}

\end{document}